%% Last update: 3/2008

\documentclass[12pt]{amsart}
\usepackage{amssymb, latexsym, amsthm} %, makeidx}

\linespread{1.1}

\newtheorem{thm}{Theorem}[section] %the resolution could also be [subsection]
%[thm]{Algorithm}

\newtheorem{cor}[thm]{Corollary}
\newtheorem{defn}[thm]{Definition}

\newtheorem{lem}[thm]{Lemma}
\newtheorem{prop}[thm]{Proposition}

\newtheorem{rem}[thm]{Remark}

\newcommand\Cref[1]{{Corollary~\ref{#1}}}

\newcommand\Lref[1]{{Lemma~\ref{#1}}}
\newcommand\Pref[1]{{Proposition~\ref{#1}}}

\newcommand\Tref[1]{{Theorem~\ref{#1}}}

\def\Z {{\mathbb {Z}}}

\newcommand\mul[1]{{#1^{\times}}} % The multiplicative group
\newcommand\operA[2]{{\if!#2!\operatorname{#1}\else{\operatorname{#1}_{#2}^{\phantom{I}}}\fi}} 

\long\def\forget#1\forgotten{}
% \eqref{#1} %

\renewcommand\H[4][!]{{\operatorname{H}^{#2}\!\!\;({#3},{#4}{\if!#1\relax\else(#1)\fi})}}

\DeclareMathOperator{\Cor}{cor}
\DeclareMathOperator{\I}{Im}
\DeclareMathOperator{\K}{Ker}

\DeclareMathOperator{\pr}{Proj}

\DeclareMathOperator{\BKer}{BKer}
\DeclareMathOperator{\3MP}{3MP}

\DeclareMathOperator{\ch}{char}

\DeclareMathOperator{\Br}{Br}

\DeclareMathOperator{\Tr}{Tr}

\DeclareMathOperator{\N}{N}

\DeclareMathOperator{\Zp}{\Z/p\Z}
\DeclareMathOperator{\Ker}{Ker}
\DeclareMathOperator{\ram}{ram}

\DeclareMathOperator{\Gal}{Gal}

\def\codim{{\operatorname{codim}}}
\def\coker{{\operatorname{coker}}}

\def\N{{\operatorname{N}}}

\def\Z{\mathbb{Z}}

\newcommand\res[1][{}] {{\operatorname{res}_{#1}}}
 % The centralizer
%\newcommand\Norm[1][]{\operA{N}{#1}}
%\newcommand\mul[1]{{#1^{\times}}} % The multiplicative group
%\newcommand\suchthat{{\,:\ \,}}
 % Produces nicely spaced [K:F]. Don't use in subscripts or superscripts -- there LaTeX manages by his own.
\newcommand{\Trace}[1][]{\if!#1!\operatorname{Tr}\else{\operatorname{Tr}_{#1}^{\phantom{I}}}\fi} % Usage: $\Tr[K/F]\chi_a$.

\newcommand\book[4]{{{#1},\ {{#2}}{\if!#3!\relax\else{,\ {#3}}\fi}{\if!#4!\relax\else{,\ {#4}}\fi}.}} % Format: \book{author}{title}{info}{year}

\newif\ifXY % turns XY version on/off
\XYtrue     % Turn it on
%\XYfalse    % Turn it off
%
\ifXY

\input xy
\input xyidioms.tex
\usepackage{xy}
\xyoption{all} %
\fi % For \ifXY

\begin{document}
\title{Triple Massey products with weights in Galois cohomology}
\author{Eliyahu Matzri}

\address{Department of Mathematics \\
         Bar-Ilan University\\
         Ramat-Gan \\
         Israel}
\email{elimatzri@gmail.com}
%\thanks{The author was supported by the Israel Science Foundation (grant No.\ 152/13) and by the Kreitman foundation.}
\thanks{The author would like to thank I. Efrat for introducing him with Massey products and for many interesting discussions and helpful suggestions.}

\begin{abstract}
Fix an arbitrary prime $p$. Let $F$ be a field containing a primitive $p$-th root of unity, with absolute Galois group $G_F$, and let $H^n$ denote its mod $p$ cohomology group $H^n(G_F,\Z/p\Z)$.
The triple Massey product of weight $(n,k,m)\in \mathbb{N}^3$ is a partially defined, multi-valued function $\langle \cdot,\cdot,\cdot \rangle: H^n\times H^k\times H^m\rightarrow  H^{n+k+m-1}.$ %(in the mod-$p$ Galois cohomology)
In this work we prove that for an arbitrary prime $p$, any defined $\3MP$ of weight $(n,1,m)$, where the first and third entries are assumed to be symbols, contains zero; and that for $p=2$ any defined $\3MP$ of the weight $(1,k,1)$, where the middle entry is a symbol, contains zero. Finally, we use the description of the kernel of multiplication by a symbol to study general 3MP where the middle slot is a symbol. The main tools we will be using is Lemma \ref{L1} concerning the the annihilator of cup product with an $H^1$ element, and \Tref{Tig}, generalizing a Theorem of Tignol on quaternion algebras with trivial corestriction along a separable quadratic extension.
\end{abstract}
%Lemma concerning the the annihilator of cup product with an $H^1(F)$ element using the notion of a Norm variety.

\maketitle
%-----------------------------------------------------------------
% \setcounter{tocdepth}{3}
% \tableofcontents
%

%\begin{acknowledgment}
%\end{acknowledgment}
\section{introduction}
Let $p$ be a prime number. Let $F$ be a field of characteristic prime to $p$, containing a primitive $p$-th root of unity, with absolute Galois group $G_F$, and let $H^n$ denote its mod $p$ cohomology group $H^n(G_F,\Z/p\Z)$.
In recent years there has been increasing interest in specific external operations called $d-$fold Massey products on $H(G_F)=\oplus H^n$, the cohomology $\Z/p\Z$-algebra of the differential graded $\Z/p\Z$-algebra (abbreviated DGA) $C(G_F)=\oplus C^n$ of continuous cochains on $G_F$. For $d\geq 2$, the $d$-fold Massey product is a family, indexed by $\mathbb{N}^d$, of certain semi-defined, multi-valued functions, which we denote dMP of weight $(n_1,\cdots,n_d)$ (for a precise definition we refer the reader to \cite{Kraines66}). In particular for $d=3$, the $\3MP$ of weight $(n,k,m)$ is a function as above,  $$\langle \cdot,\cdot,\cdot \rangle: H^n\times H^k\times H^m\rightarrow  H^{n+k+m-1}.$$
A defined dMP is called \textbf{essential} if it does not contains the zero element. One of the main conjectures (due to Min\'{a}\v{c} and T\^{a}n \cite{MT}) in this context is that in Galois cohomology for $d\geq 3$, there are no essential dMP of weight $(1,1,\cdots ,1)$.
Recently, the case of $\3MP$ of weight $(1,1,1)$ was the focus of attention. Starting with the case of number fields and $p=2$, in \cite{HW}, Hopkins and Wickelgren proved that there are no essential $\3MP$ of weight $(1,1,1)$ in Galois cohomology, by constructing a splitting variety for the $\3MP$ and using a local-global principle to show it has a rational point. Then in \cite{MT}, Min\'{a}\v{c} and T\^{a}n generalized the result to arbitrary field and $p=2$ by introducing a rational point over any field, and later in \cite{MT2}, to number fields and arbitrary prime, using a local-global principle for certain embedding problems. In \cite{EM} the above results were obtained by translating the question to the theory of central simple algebras and using the classical Theorem of Albert, Brauer, Hasse and Noether for number fields and that of Albert on bi-quaternion algebras. Finally, the general case of the conjecture for $\3MP$ of weight $(1,1,1)$ was proved in \cite{EM2}, \cite{Mat14} and generalized to the case where the base field does not contain a primitive $p$-th root of unity in \cite{MT14c}.

In this work we consider higher $\3MP$ where some slots are assumed to be symbols.
The work is organized as follows: In section $2$, we give the necessary background on triple Massey products. In section $3$ we give some background on the Norm variety of a non trivial symbol and prove the characteristic zero case of  \Lref{L1}, stating: \\
\noindent\textbf{\Lref{L1}.}\emph{ Let $F$ be a field of characteristic prime to $p$, which is prime to $p$ closed. Let $\alpha \in H^n(F)$ be a symbol and $b\in H^1(F)$. Then $\alpha \cdot b=0$ if and only if there exist $s_i\in H^1(F) \  i=1,\dots, n$ and a presentation $\alpha=s_1\cdots s_n$ such that $s_n\cdot b=0$.}\\
In section $4$ we use \Lref{L1} to prove the characteristic zero case of \Tref{Tig} and \Tref{n1m} stating:\\
\noindent\textbf{\Tref{Tig}.}\emph{ Let $F$ be a field with $\ch(F)\neq 2$, and $K/F$ a field extension of degree $2$. Let $\alpha=\alpha'\cdot k\in H^{n+1}(K,\Z/2\Z)$ where, $\alpha'~\in~H^n(F)$ is a symbol and $k\in H^1(K)$. Then, $$\alpha=\res_{K/F}(\alpha'\cdot c) \ ; c\in H^1(F) \ \hbox{if and only if} \  \Cor_{K/F}(\alpha)=0.$$}\\
\noindent\textbf{\Tref{n1m}.}\emph{ Let $F$ be a field of characteristic prime to $p$, and $\langle \alpha, a, \beta \rangle$ be a defined $\3MP$ of weight $(n,1,m)$ where $\alpha, \beta$ are symbols. Then, $~0~\in~\langle \alpha, a, \beta \rangle.$}\\
In section $5$ we prove the characteristic zero case of \Cref{1n1} and \Tref{BKer} stating:\\
\noindent\textbf{\Cref{1n1}.}\emph{ Let $F$ be a field of characteristic prime to $p$, and $\langle x,\alpha, y \rangle $ be a defined $\3MP$ of weight $(1,n,1)$ where $\alpha$ is a symbol. Then, $0~\in~\langle x,\alpha, y\rangle.$ }\\
\noindent\textbf{\Tref{BKer}.}\emph{ Let $F$ be a field of characteristic prime to $p$, $\beta\in H^k$ a symbol, and $\alpha\in H^n\cap \BKer(\beta), \gamma\in H^m\cap \BKer(\beta)$. Then the 3MP $\langle \alpha, \beta, \gamma \rangle$ is defined and for any presentation $\alpha=\sum_{i=1} ^{s} \alpha_i; \gamma=\sum_{j=1} ^{t} \gamma_j$ where the summands are basic elements of $\Ker(\beta)$ we have,
$$\langle \alpha, \beta, \gamma \rangle \subseteq \sum_{i,j}(\alpha_i\cdot H^{k+m-1}+\gamma_j\cdot H^{n+k-1}).$$}
In section $6$ we prove that all the above is also valid for fields of positive characteristic prime to $p$. The reason for this section is that (to my knowledge) Norm varieties were only shown to exist in characteristic zero.

\section{Notations and conventions}

For a field $F$ and a prime number $p$, we use the following notations  \begin{itemize}

                          \item $G_F=\Gal(F^{sep}/F)$, where $F^{sep}$ is a fixed separable closure for $F$.
                          \item We say that $F$ is prime to $p$ closed if $F$ has no finite field extensions of degree prime to $p$. Equivalently, $G_F$ is a pro-$p$ group.
                          \item $C^n(F)=C^n(G_F,\Z/p\Z)$ (or just $C^n$ when $F$ is understood) is the continues $n$-cochains of $G_F$ and $C(F)=\oplus_{n\in \mathbb{N}}C^n(F)$ is the differential graded $\Z/p\Z$ algebra of continues cochains of $G_F$ with product being the cup product, which we denote by $\cdot$.
                          \item $H^n(F)=H^n(G_F,\Z/p\Z)$ is the $n$-th continues cohomology group of $G_F$, and $H(F)=\oplus_{n\in \mathbb{N}}H^n(F)$ is the cohomology $\Z/p\Z$ algebra.
                          \item We let $\res_{K/F}$ denote the restriction homomorphism $$\res_{K/F}~:~C(F)~\to~C(K)$$ and the induced homomorphism $$\res_{K/F}:H(F)\to H(K).$$
                          \item We let $\Cor_{K/F}$ denote the corestriction homomorphism $$\Cor_{K/F}:C(K)\to C(F)$$ and the induced homomorphism $$\Cor_{K/F}:H(K)\to H(F).$$
                          \item An element $\alpha\in H^n(F)$ is called a symbol if $\alpha=a_1\cdot a_2\cdots a_n$ for $a_i\in H^1(F), \ i=1\dots n.$
                          \item Whenever we talk about Massey products over a field $F$, we assume $F$ contains a root of unity of order $p$.
                 \end{itemize}

For $X/F$ a smooth, irreducible, projective variety of dimension $d$, one defines:
\begin{itemize}
\item The group of $0$-dimensional $K_1$-cycles as the cokernel of the residue homomorphism, $$A_0(X,K_1)=\coker \big{(}\underset{\codim (x)=d-1}{\coprod}K^M_{2}(F(x))\rightarrow \underset{\codim (x)=d}{\coprod}K^M_{1}(F(x))\big{)}.$$
\item The group of reduced $0$-dimensional $K_1$-cycles as $$\bar{A}_0(X,K_1)=\coker \big{(}A_0(X \times X,K_1)\overset{(pr_1)_{*}-(pr_2)_{*}}{\xrightarrow{\hspace*{2cm}}}A_0(X,K_1)\big{)},$$ for a detailed account of this we refer the reader to \cite{SJ}, \cite{Weib}.
\end{itemize}

\section{General triple Massey products}
In this section we give the necessary background on triple Massey products (for more details we refer the reader to \cite{Dwyer75}, \cite{IE}, \cite{HW}).
\subsection{Definitions}
%The main sources for this background subsection are \cite{Dwyer75}, \cite{IE}, \cite{HW} and \cite{Wic}.
Let $R$ be a unital commutative ring. A differential graded algebra (abbreviated DGA) over $R$ is a graded $R$-algebra
$$ C^\bullet = \oplus_{k\geq 0}C^k = C^0 \oplus C^1\oplus C^2\oplus ... $$
equipped with a differential $\delta: C^{\bullet} \rightarrow C^{\bullet+1}$~such~that:
\begin{enumerate}
\item $\delta(xy)=\delta(x) y+(-1)^k x\delta(y)$ for $x\in C^k;$
\item $\delta^2=0.$
\end{enumerate}
One then defines the cohomology algebra $H^{\bullet}=\K(\delta)/\I(\delta)$ of the DGA $C^{\bullet}.$
\begin{defn}
Let $(x_1,x_2,x_3)\in H^n\times H^m\times H^k$ be such that $$x_1x_2=0 \hbox{ and} \  x_2x_3=0.$$ Choose representatives, $(a_{1}, a_{2}, a_{3})\in C^n\times C^m\times C^k$ for $(x_1,x_2,x_3)$. Then there exists $(a_{1,2},a_{2,3})\in C^{n+m-1}\times C^{m+k-1}$ such that
$$\delta (a_{1,2})=a_1a_2 \ \hbox{and } \ \delta(a_{2,3})=a_2a_3,$$
such a collection of cochains, denoted $A$, is called a defining system for the $\3MP$ $\langle x_1, x_2, x_3\rangle$.
Given a defining system $A$, define the related cocycle $$c(A)=a_1a_{2,3}+(-1)^{n+1}a_{1,2}a_3,$$ note that it is a cocycle as
$\delta(c(A))=(-1)^na_1a_2a_3+(-1)^{n+1}a_1a_2a_3=0$.\\
Finally, define $\langle x_1, x_2, x_3\rangle $ to be the subset of $H^{n+m+k-1}$ consisting of all classes $w$ for which there exists a defining system $A$ such that $c(A)$ represents $w$.
\end{defn}
\begin{rem}
In \cite{Kraines66} the defined $\3MP$ is the same as ours but multiplied by the sign $(-1)^{n+m}$, in particular the following Theorem (taken from \cite{Kraines66}) holds for our $\3MP$.
\end{rem}

%\subsection{Properties of triple Massey products}
%Let $G$ be a profinite group. In [Krains] he proves properties regarding Higher Massey products in the context of the DGA of singular cochains of a pair of topological spaces $X,A$. We start by explaining why the results in [Krains] apply to the case of the DGA of continues cochains of $G$.
%Indeed if $G$ is finite, then it has the discrete topology and in this case it is known [reference] that its group cohomology is isomorphic to the singular cohomology of it's classifying space $BG$ (which has $G$ as its fundamental group), thus we can apply [Krains] to this case.
%As for the general case it follows from the fact that $G$ is the inverse limit of its finite quotients, reducing everything to the finite case.\\
%Let $G$ be a profinite group and $C(G)=\oplus C^n(G,\Z/p\Z)$, the DGA of continues cochains on $G$ with cup product as multiplication.
%In this subsection we summarize what we will need from \cite{Kraines66}.

%****reference to the above should be at chapter II of : Adem, Alejandro; Milgram, R. James (2004), Cohomology of Finite Groups, Grundlehren der Mathematischen Wissenschaften 309 (2nd ed.), Springer-Verlag, ISBN 3-540-20283-8, MR 2035696, Zbl 1061.20044\\ \ \\% \ \\

\begin{thm}\label{KT1}%Theorem 1:\\
Assume $\langle u_1,\dots ,u_k\rangle $ is defined and let
$v\in H^m$. Then the $k$-fold product $\langle u_1,\dots,u_t v, \dots, u_k\rangle $ is defined, for
$t=1,\dots, k.$ Furthermore the following relations are satisfied.\\
(1) $\langle u_1,\dots ,u_k\rangle v\subset \langle u_1,\dots ,u_k v\rangle $\\
(2) $v \langle u_1,\dots ,u_k\rangle \subset (-1)^{m}\langle v u_1,\dots ,u_k\rangle $\\
(3) $\langle u_1,\dots,u_t v, \dots, u_k\rangle \cap \langle u_1,\dots,u_t ,vu_{t+1}, \dots, u_k\rangle \neq \phi$\\
These relations may be interpreted as equalities modulo the sum of the indeterminacies.
\end{thm}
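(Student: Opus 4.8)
The plan is to argue throughout at the level of cochains, by transforming defining systems, using the sign conventions of \cite{Kraines66}. Recall that a defining system $A$ for $\langle u_1,\dots,u_k\rangle$ is a family of cochains $a_{ij}\in C^{\bullet}$, $1\le i\le j\le k$ with $(i,j)\neq(1,k)$, in which each $a_{ii}$ is a cocycle representing $u_i$ and $\delta a_{ij}=\sum_{l=i}^{j-1}\bar a_{il}\,a_{l+1,j}$ (here $\bar x$ is the sign-modified cochain prescribed by the convention), and that $\langle u_1,\dots,u_k\rangle$ is the set of classes of the cocycles $c(A)=\sum_{l=1}^{k-1}\bar a_{1,l}\,a_{l+1,k}$, $A$ ranging over all defining systems. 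Fix once and for all a cocycle $b\in C^{m}$ with $[b]=v$. The uniform device is to convert a defining system of $\langle u_1,\dots,u_k\rangle$ into one (or two) defining systems of the modified products by multiplying selected entries by $b$, using $\delta b=0$ to carry the defining-system identities along.

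For the last-slot relation (1), take a defining system $A=(a_{ij})$ of $\langle u_1,\dots,u_k\rangle$ and set $a^{r}_{ij}=a_{ij}$ for $j<k$, $a^{r}_{ik}=a_{ik}\,b$ for $j=k$; then $a^{r}_{kk}=a_k b$ is a cocycle representing $u_k v$. Since $\delta b=0$ one has $\delta(a_{ik}b)=\delta(a_{ik})\,b$, so the defining-system identities for $A^{r}$ are precisely those for $A$ multiplied on the right by $b$; hence $A^{r}$ is a defining system for $\langle u_1,\dots,u_{k-1},u_k v\rangle$. In particular the latter is defined, and $c(A^{r})=c(A)\,b$, which gives $\langle u_1,\dots,u_k\rangle v\subseteq\langle u_1,\dots,u_k v\rangle$. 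The first-slot relation (2) is the mirror image, multiplying the entries $a_{1j}$ on the left by $b$; comparing the resulting cocycle with $b\,c(A)$ produces the sign $(-1)^{m}$ in $v\langle u_1,\dots,u_k\rangle\subseteq(-1)^{m}\langle v u_1,\dots,u_k\rangle$. That $\langle u_1,\dots,u_t v,\dots,u_k\rangle$ is defined for the remaining, interior, values of $t$ will be a by-product of (3).

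For the interior relation (3), from a single defining system $A$ of $\langle u_1,\dots,u_k\rangle$ we build a defining system $A'$ of $\langle\dots,u_t v,\dots\rangle$ and a defining system $A''$ of $\langle\dots,v u_{t+1},\dots\rangle$ with a common value. Entries $a_{ij}$ whose index range $[i,j]$ lies entirely on one side of slots $t,t+1$ are kept unchanged in both; entries meeting only slot $t$, respectively only slot $t+1$, are modified by right-, respectively left-, multiplication by $b$ exactly as in (1), resp.\ (2). The crux is the one cochain attached to the pair $(t,t+1)$: for $A'$ it must have coboundary $\overline{(a_t b)}\,a_{t+1}$, for $A''$ it must have coboundary $\bar a_t\,(b\,a_{t+1})$, and these cochains coincide up to a sign fixed by the convention. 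Either way the right-hand side is a cocycle (as $a_t,a_{t+1}$ are cocycles and $\delta b=0$) representing $u_t v u_{t+1}=\pm v u_t u_{t+1}=0$ by graded-commutativity of $H^{\bullet}$, hence a coboundary; so a common primitive $h$ exists. One then propagates this choice outward to every straddling pair $(i,j)$ with $i\le t<t+1\le j$, at each step integrating a cochain that the identities already in force, together with graded-commutativity, make into a null-homologous cocycle. A term-by-term comparison of $c(A')$ and $c(A'')$ finally shows that the two classes agree modulo the (common part of the) indeterminacy, so the two Massey products intersect. The main obstacle is exactly this bookkeeping: arranging the signs so that the propagated primitives are mutually compatible, and verifying at each stage that the cochain to be integrated really is an exact cocycle. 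Since \Tref{KT1} is quoted verbatim from \cite{Kraines66}, we limit ourselves to this sketch and refer to loc.\ cit.\ for the details.
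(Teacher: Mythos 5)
The paper gives no proof of this statement at all: it is quoted verbatim from \cite{Kraines66}, with only the preceding remark reconciling the sign conventions, so there is no argument of the author's to compare yours against. Your cochain-level sketch is the standard one. Parts (1) and (2) are complete and correct as written: since $\delta b=0$, right- (resp.\ left-) multiplication of the last (resp.\ first) row of a defining system by $b$ manifestly yields a defining system, and the value transforms by $c(A)\mapsto c(A)\,b$ (resp.\ $\pm b\,c(A)$), which is all that is needed.

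The one place where you assert rather than prove is the propagation step in (3). For a straddling pair $(i,j)$ with $j-i\geq 2$ the naive modification $a_{ij}\mapsto a_{ij}b$ fails, precisely because the identity $\bar a_{il}\,a_{l+1,j}\,b=\overline{(a_{il}b)}\,a_{l+1,j}$ would require commuting $b$ past $a_{l+1,j}$ \emph{at the cochain level}, and the cochain algebra $C^\bullet(G_F,\Z/p\Z)$ is only graded-commutative up to homotopy. Consequently the statement that the obstruction cocycle at each stage is ``null-homologous by graded-commutativity'' is not automatic: one must introduce explicit correction terms (e.g.\ via $\cup_1$-products or the homotopies realizing commutativity), and controlling these is the actual content of Kraines's proof of the interior case. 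Since you explicitly defer to \cite{Kraines66} for exactly this bookkeeping, and the paper itself does no more, this is acceptable; but you should be aware that the deferred step is where the theorem lives, not a routine verification. It is also worth recording that the present paper only ever invokes the result for $k=3$ (in the proofs of Theorems \ref{n1m} and \ref{Basic}), where the only straddling pair is $(t,t+1)$ itself, no propagation occurs, and your argument for (3) --- the common primitive $h$ of the exact cocycle $\pm a_t b\, a_{t+1}$, exact because $u_tvu_{t+1}=\pm v\,u_tu_{t+1}=0$ in cohomology --- is already complete.
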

%\begin{thm}\label{KT2}%Theorem 2:\\
%Assume $\langle u_1,\dots ,u_k\rangle $ is defined. Then $\langle u_k,\dots ,u_1\rangle $ is defined and
%$$\langle u_1,\dots ,u_k\rangle = (-1)^h\langle u_k,\dots ,u_1\rangle$$
%where
%$h= \sum_{1\leq r < s \leq k}p_r p_s +(k-1)\sum_{r=1} ^k p_r +\frac{(k-1)(k-2)}{2}.$
%\end{thm}
%
%
%
%\begin{rem}\label{rem1}
%For $\3MP$, i.e $k=3$, we have that $$h=p_1p_2+p_1p_3+p_2p_3+1 \ \ (mod \ 2)$$
%\end{rem}

We finish this section with the a couple of useful propositions.
\begin{prop}\label{3mp}
For any defined $\3MP$ $\langle \alpha,\beta ,\gamma \rangle$ of weight $(n,m,k)$, and any defining system $A$, one has:
\begin{enumerate}
  \item $\langle \alpha,\beta ,\gamma \rangle= c(A)+\alpha H^k+\gamma H^n.$
  \item $0\in \langle \alpha,\beta ,\gamma \rangle \Leftrightarrow c(A) \in \alpha H^k+\gamma H^n \Leftrightarrow \langle \alpha,\beta ,\gamma \rangle=\alpha H^k+\gamma H^n.$
\end{enumerate}
\end{prop}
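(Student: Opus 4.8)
The plan is to pass to the cochain level and track exactly how the cocycle $c(A)$ changes as one varies the defining system, isolating the variation into the two pieces $\alpha H^k$ and $\gamma H^n$. Fix once and for all cocycle representatives $a_1\in C^n$, $a_2\in C^m$, $a_3\in C^k$ of $\alpha,\beta,\gamma$, and let $A=(a_1,a_2,a_3,a_{1,2},a_{2,3})$ be a defining system. First I would compare $A$ with another defining system $A'=(a_1,a_2,a_3,a'_{1,2},a'_{2,3})$ built on the \emph{same} representatives. Since $\delta a_{1,2}=a_1a_2=\delta a'_{1,2}$ and $\delta a_{2,3}=a_2a_3=\delta a'_{2,3}$, the differences $z_{1,2}:=a'_{1,2}-a_{1,2}\in C^{n+m-1}$ and $z_{2,3}:=a'_{2,3}-a_{2,3}\in C^{m+k-1}$ are cocycles, and expanding the definition gives
\[
c(A')=a_1a'_{2,3}+(-1)^{n+1}a'_{1,2}a_3=c(A)+a_1z_{2,3}+(-1)^{n+1}z_{1,2}a_3 .
\]
In cohomology $a_1z_{2,3}$ represents $\alpha\cdot[z_{2,3}]\in\alpha H^k$ and $z_{1,2}a_3$ represents $[z_{1,2}]\cdot\gamma$, which by graded-commutativity of the cup product on $H(G_F)$ lies in $\gamma H^n$ (the intervening sign is immaterial, $\gamma H^n$ being a subgroup). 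Hence $c(A')\in c(A)+\alpha H^k+\gamma H^n$. Conversely, given any $u$ and $v$ of the appropriate degrees, I would pick cocycle representatives and set $a'_{2,3}=a_{2,3}+z_{2,3}$, $a'_{1,2}=a_{1,2}+z_{1,2}$ with $[z_{2,3}]$ equal to $u$ and $[z_{1,2}]$ chosen (up to an explicit sign) so that $(-1)^{n+1}[z_{1,2}]\cdot\gamma$ runs over all of $\gamma H^n$; this produces a defining system realizing $c(A)+\alpha u+\gamma v$. Thus the defining systems built on the fixed triple $(a_1,a_2,a_3)$ realize precisely the coset $c(A)+\alpha H^k+\gamma H^n$.

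It then remains to check that allowing the representatives $a_1,a_2,a_3$ themselves to change does not move or enlarge this coset. If $\tilde a_i=a_i+\delta e_i$ is another choice of representatives, then $\tilde a_1\tilde a_2$ and $\tilde a_2\tilde a_3$ differ from $a_1a_2$ and $a_2a_3$ by explicit coboundaries; adding to $a_{1,2},a_{2,3}$ the corresponding primitives (for instance replacing $a_{1,2}$ by $a_{1,2}+(-1)^na_1e_2+e_1a_2+(-1)^n\,\delta e_1\cdot e_2$, whose coboundary is exactly $\tilde a_1\tilde a_2$, and similarly for $a_{2,3}$) produces a defining system $\tilde A$ over the new representatives, and a direct expansion of $c(\tilde A)=\tilde a_1\tilde a_{2,3}+(-1)^{n+1}\tilde a_{1,2}\tilde a_3$ shows that $c(\tilde A)-c(A)$ is a coboundary plus a sum of a term in $\alpha H^k$ and a term in $\gamma H^n$. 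Hence $c(\tilde A)$ lies in the same coset, and by the previous paragraph the defining systems over $(\tilde a_1,\tilde a_2,\tilde a_3)$ sweep out exactly that coset. (This is the standard well-definedness of Massey products; alternatively one may simply invoke \cite{Kraines66} or \cite{Dwyer75}.) Combining the two paragraphs yields $\langle\alpha,\beta,\gamma\rangle=c(A)+\alpha H^k+\gamma H^n$, which is part (1).

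Part (2) is then purely formal. Since $\alpha H^k+\gamma H^n$ is a subgroup of $H^{n+m+k-1}$, we have $0\in\langle\alpha,\beta,\gamma\rangle$ iff $-c(A)\in\alpha H^k+\gamma H^n$ iff $c(A)\in\alpha H^k+\gamma H^n$; and in that case $c(A)+\alpha H^k+\gamma H^n=\alpha H^k+\gamma H^n$, so $\langle\alpha,\beta,\gamma\rangle=\alpha H^k+\gamma H^n$. Conversely, if $\langle\alpha,\beta,\gamma\rangle=\alpha H^k+\gamma H^n$ then $0$ belongs to it because the right-hand side is a subgroup. This establishes all the asserted equivalences. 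The only genuinely technical point in the whole argument is the sign-bookkeeping in the second paragraph, namely verifying that a coboundary change in the $a_i$ can be absorbed by correcting the homotopy cochains $a_{i,j}$ so that $c(A)$ moves only within $\alpha H^k+\gamma H^n$; everything else reduces to bilinearity of the cup product and the graded-commutativity of $H(G_F)$.
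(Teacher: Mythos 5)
Your argument is correct and is exactly the standard computation the paper compresses into its one-line proof ("follows from the $\Z/p\Z$-linearity of $\delta$"): varying the primitives $a_{1,2},a_{2,3}$ by cocycles shifts $c(A)$ precisely through the indeterminacy subgroup, and part (2) is then formal since that indeterminacy is a subgroup. Your extra paragraph checking independence of the chosen representatives $a_i$ (and the sign bookkeeping there) is a welcome elaboration of a point the paper leaves implicit, but it is not a different method.
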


\begin{proof}
The second claim easily follows from the first. The first claim follows easily from the $\Z/p\Z$ linearity of the differential $\delta$.
%consider two defining systems $A,B$ for $\langle \alpha,\beta ,\gamma \rangle$.
%Now compute: $c(A)-c(B)=\alpha a_{\beta \gamma}+(-1)^{n+1} a_{\alpha \beta}\gamma - \alpha b_{\beta \gamma}+(-1)^{n+1} b_{\alpha \beta}\gamma=\alpha (a_{\beta \gamma}-b_{\beta \gamma})+(-1)^{n+1} (a_{\alpha \beta}-b_{\alpha \beta})\gamma$.
%But clearly $\delta (a_{\beta \gamma}-b_{\beta \gamma})=\delta(a_{\alpha \beta}-b_{\alpha \beta})=0$, and the claim is clear.
\end{proof}

\begin{prop}\label{p}
Let $F$ be a field of characteristic prime to $p$, $L$ a prime to $p$ closure of $F$, and $\langle \alpha, \beta, \gamma \rangle$, a defined $\3MP$ of weight $(n,k,m)$ over $F$. Then,
$$0\in \langle \alpha, \beta, \gamma \rangle \Leftrightarrow 0\in \langle \alpha_L, \beta_L, \gamma_L \rangle.$$
\end{prop}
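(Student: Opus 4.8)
The plan is to translate the assertion $0\in\langle\alpha,\beta,\gamma\rangle$, which a priori depends on a choice of defining system, into a concrete statement about the location of one fixed $F$-rational cocycle relative to its indeterminacy (via \Pref{3mp}), and then to push that statement up to $L$ and pull it back down along the finite subextensions of $L/F$, using that $\Cor_{K/F}\circ\res_{K/F}$ is multiplication by $[K:F]$ together with the projection formula.

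Set-up: since $\langle\alpha,\beta,\gamma\rangle$ is defined over $F$, fix a defining system $A=(a_1,a_2,a_3;a_{1,2},a_{2,3})$ of cochains over $F$, and write $I_F=\alpha\cdot H^{k+m-1}(F)+\gamma\cdot H^{n+k-1}(F)$ for the indeterminacy subgroup of $H^{n+k+m-1}(F)$ occurring in \Pref{3mp}, with $I_L$ defined analogously over $L$. Because $\res_{L/F}\colon C(F)\to C(L)$ is a morphism of differential graded $\Z/p\Z$-algebras, applying it entrywise to $A$ yields a defining system $A_L$ for $\langle\alpha_L,\beta_L,\gamma_L\rangle$ --- in particular this product is defined --- and $c(A_L)=\res_{L/F}(c(A))$; moreover $\res_{L/F}(I_F)\subseteq I_L$ since $\alpha$ and $\gamma$ are restricted from $F$. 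By \Pref{3mp}, $0\in\langle\alpha,\beta,\gamma\rangle$ if and only if $c(A)\in I_F$, and $0\in\langle\alpha_L,\beta_L,\gamma_L\rangle$ if and only if $c(A_L)\in I_L$. The forward implication is then immediate: $c(A)\in I_F$ forces $c(A_L)=\res_{L/F}(c(A))\in I_L$.

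For the converse, suppose $c(A_L)\in I_L$, say $\res_{L/F}(c(A))=\alpha_L\cdot u+\gamma_L\cdot v$ with $u\in H^{k+m-1}(L)$ and $v\in H^{n+k-1}(L)$. Writing $L$ as the filtered union of its finite subextensions $F_i/F$, all of degree $d_i:=[F_i:F]$ prime to $p$, and using that continuous Galois cohomology commutes with such filtered colimits ($H^\bullet(L)=\varinjlim_i H^\bullet(F_i)$), there is an index $i$ for which the identity already holds over $F_i$: $\res_{F_i/F}(c(A))=\alpha_{F_i}\cdot u'+\gamma_{F_i}\cdot v'$ for suitable $u'\in H^{k+m-1}(F_i)$ and $v'\in H^{n+k-1}(F_i)$. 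Applying $\Cor_{F_i/F}$, the projection formula (legitimate because $\alpha,\gamma$ come from $F$) together with $\Cor_{F_i/F}\circ\res_{F_i/F}=d_i\cdot\mathrm{id}$ gives
\[
 d_i\cdot c(A)=\alpha\cdot\Cor_{F_i/F}(u')+\gamma\cdot\Cor_{F_i/F}(v')\in I_F .
\]
Since $c(A)\in H^{n+k+m-1}(F)$ is $p$-torsion and $d_i$ is invertible modulo $p$, multiplying through by an inverse of $d_i$ modulo $p$ yields $c(A)\in I_F$, hence $0\in\langle\alpha,\beta,\gamma\rangle$ by \Pref{3mp}.

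The only delicate point is that a defining system for the product over $L$ need not be the restriction of one over $F$; this is exactly what invoking \Pref{3mp} avoids, by converting the vanishing question into a statement about the single, $F$-defined cocycle $c(A)$ and its indeterminacy. Beyond that I expect no genuine obstacle: the remaining ingredients --- restriction being a morphism of differential graded algebras, Galois cohomology commuting with filtered colimits of fields, and the behaviour of the indeterminacy subgroup under transfer (via the projection formula and the identity $\Cor_{K/F}\circ\res_{K/F}=[K:F]\cdot\mathrm{id}$) --- are all standard.
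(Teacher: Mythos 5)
Your proposal is correct and follows essentially the same route as the paper: restrict a fixed $F$-defined cocycle to $L$, use \Pref{3mp} to reduce the vanishing question to membership in the indeterminacy, descend to a finite prime-to-$p$ subextension by a colimit argument, and corestrict using the projection formula and the invertibility of the degree mod $p$. No substantive differences.
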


\begin{proof}
The $(\Rightarrow)$ direction is clear. For the other direction, consider an element $c\in \langle \alpha, \beta, \gamma \rangle$. By assumption after extending scalars to $L$ we have $c_L=\alpha_L\cdot h + g\cdot \gamma_L$ for $h\in H^{m+k-1}(L); \ g\in H^{n+m-1}(L)$, hence there exist a field extension,
$F \subseteq K \subseteq L$ of finite dimension, which is prime to $p$, such that $c_K=\alpha_K\cdot h + g\cdot \gamma_K$ for $h\in H^{m+k-1}(K); \ g~\in~H^{n+m-1}(K).$
The Lemma follows by taking corestriction using the projection formula and \Pref{3mp}.
\end{proof}

%\section{Triple Massey products and symbols}
%In this section we deal with $\3MP$ with weight $(n,k,m)$ where some of the slots are assumed to be symbols, that is elements of the form $\alpha=a_1\cdots a_n$ where $a_i\in H^1(F)$.
\section{ A Lemma on the kernal of multiplication by an $H^1$ element with a symbol}
Most of the material in this section is taken (with minor modifications) from \cite{Weib}, \cite{SJ}. %[Suslin and.. ].
Let $F$ be a field of characteristic zero, %different from $p$,
which is prime to $p$ closed-i.e $F$ has no field extensions of degree prime to $p$, and in particular $F$ contains a primitive $p$-th root of unity $\rho$.
%Recall that an element $\alpha \in H^n(F)$ is called a symbol if it is of the form $\alpha=a_1\cdots a_n$ where $a_i\in H^1(F)$.
Symbols turn out to be important in Galois cohomology as by the Bloch-Kato conjecture proved by Voevodsky and Rost they generate the mod $p$ cohomology algebra of $F$.
One of their main features is the fact that they have "$p$-generic splitting varieties" generalizing Severi-Brauer varieties, which help in understanding their presentations.
The main lemma we are going to prove is the following:
\begin{lem}\label{L1}
Let $F$ be a field of characteristic prime to $p$, which is prime to $p$ closed. Let $\alpha \in H^n(F)$ be a symbol and $b\in H^1(F)$. Then $\alpha \cdot b=0$ if and only if there exist $s_i\in H^1(F) \  i=1,\dots, n$ and a presentation $\alpha=s_1\cdots s_n$ such that $s_n\cdot b=0$.
\end{lem}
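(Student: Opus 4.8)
The plan is to use the norm variety (or $p$-generic splitting variety) $X$ of the symbol $\alpha = a_1 \cdots a_n \in H^n(F)$, together with the group $\bar A_0(X,K_1)$ of reduced zero-dimensional $K_1$-cycles, exactly as in the Karpenko--Merkurjev / Suslin--Joukhovitski circle of ideas. The "if" direction is trivial: if $\alpha = s_1 \cdots s_n$ with $s_n \cdot b = 0$, then $\alpha \cdot b = (s_1 \cdots s_{n-1}) \cdot (s_n \cdot b) = 0$ since the cup product is graded-commutative and associative. So all the work is in the "only if" direction.

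For the "only if" direction, I would argue as follows. Since $F$ is prime to $p$ closed, $X$ has a rational point exactly when $\alpha = 0$; in general we may assume $\alpha \neq 0$, so $X$ has no rational point but $\alpha_{F(X)} = 0$ by the splitting property. The hypothesis $\alpha \cdot b = 0$ should be translated into the statement that the class $b \in H^1(F) = F^\times/(F^\times)^p$ lies in the image of a suitable norm map attached to $X$: concretely, one wants to show that there is a closed point $x \in X$ of degree prime to $p$ (hence, since $F$ is prime to $p$ closed, of degree $1$—wait, no, $X$ has no rational point, so instead one works with the generic point or with $\bar A_0(X,K_1)$) such that $b$ comes from $N_{F(x)/F}$ of a unit. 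The cleanest route is: the kernel of $H^1(F) \to H^1(F(X))$, or more precisely the relevant piece of $A_0(X,K_1)$, is generated by norms $N_{F(x)/F}(u)$ for $u \in F(x)^\times$; and the vanishing $\alpha \cdot b = 0$ forces $b$ to be detected by such a norm cycle supported on $X$. One then uses the explicit description of $F(x)$ for points on the norm variety—each such residue field, after a prime-to-$p$ extension which is trivial here, is built from the entries $a_1, \dots, a_n$—to rewrite $\alpha$ with a new last slot $s_n$ for which $s_n \cdot b = 0$. This is where Suslin--Joukhovitski's computation of $\bar A_0(X,K_1)$ for norm varieties (giving it in terms of $K_1^M$ of the base modulo the subgroup generated by norms) is the key input.

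The main obstacle, and the step requiring the most care, is the precise bookkeeping that turns "a norm cycle on $X$ witnesses $b$" into "there is a presentation $\alpha = s_1 \cdots s_n$ with $s_n b = 0$." Passing from a degree-$d$ closed point $x$ (with $d$ a power of $p$) to an honest factorization of $\alpha$ requires either an induction on $n$—slicing $X$ by the sub-symbol $a_1 \cdots a_{n-1}$ and its norm variety, so that the residue fields are towers of the shape $F(\sqrt[p]{\text{stuff}})$—or a direct appeal to the known structure of $A_0(X, K_1)$ for Rost's norm varieties, where the generators are visibly of the form $N_{E/F}(u)$ with $E = F(\nu)$ for $\nu$ a $p$-th root related to $a_n$. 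I would set up the induction on $n$: the base case $n = 1$ is the statement that $a_1 \cdot b = 0$ in $H^2(F)$ iff $b$ is a norm from $F(\sqrt[p]{a_1})$, which is classical (cyclic algebra splitting / Bass--Tate), and the inductive step peels off $a_n$, replaces $F$ by an appropriate prime-to-$p$ closed extension if needed to keep the hypotheses, and applies \Lref{L1} in lower degree. Finally, the characteristic-zero restriction here is only because norm varieties are constructed in characteristic zero; Section 6 removes it, and I would flag that the whole argument is stated over a prime-to-$p$ closed $F$ so that "prime-to-$p$ point" means "rational point" throughout, which is what makes the cycle-theoretic input usable.
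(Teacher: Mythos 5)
Your overall strategy coincides with the paper's: both arguments run through the norm variety $X$ of $\alpha$ and the exact sequence $0 \to \bar{A}_0(X,K_1) \xrightarrow{N} F^{\times} \xrightarrow{\cdot\alpha} H^{n+1}(F)$ of \Tref{22}, which converts the hypothesis $\alpha\cdot b=0$ into the statement that $b=N([x,\lambda])$ for some cycle, together with Rost's theorem (\Tref{11}) that such a cycle may be taken with $x$ a closed point of degree $p$, so that $F(x)=F[\sqrt[p]{s}]$ and hence $s\cdot b=0$. Up to that point your outline is sound, and your remarks about the characteristic-zero restriction and about working over a prime-to-$p$ closed base match what the paper does.

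The genuine gap is in the final step, which you correctly single out as the delicate one but do not close. The degree-$p$ splitting field $F(x)=F[\sqrt[p]{s}]$ produced by the cycle computation has no a priori relation to the given slots $a_1,\dots,a_n$; it is merely \emph{some} cyclic degree-$p$ splitting field of $\alpha$. Your first suggested fix --- that the generators of $\bar{A}_0(X,K_1)$ are ``visibly'' norms from an extension attached to a $p$-th root ``related to $a_n$'' --- misstates the structure theorem, and your second, an induction on $n$ peeling off $a_n$, does not get off the ground: the vanishing $\alpha\cdot b=0$ gives no control over $(a_1\cdots a_{n-1})\cdot b$, and the conclusion requires replacing \emph{all} the slots, not just the last one. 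The missing ingredient is precisely \Tref{33} (Suslin--Joukhovitski, Theorem 5.6, a consequence of Rost's chain lemma): for any cyclic degree-$p$ splitting field $E$ of $\alpha$ there is a presentation $\alpha=s_1\cdots s_n$ in which $E$ splits $s_1$, so that one may take $s_1=s$; applying this to $E=F(x)$ and permuting the slots finishes the proof. Without invoking this nontrivial input, the bookkeeping you defer cannot be completed.
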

In order to prove Lemma \ref{L1} we recall some facts about symbols.
Let $\alpha=a_1\cdots a_n \in H^n(F)$ be a non trivial symbol, then there exist a generic splitting variety of dimension $p^{n-1}-1$ for $\alpha$, namely a smooth, irreducible, projective $F$ variety $X$ of dimension $p^{n-1}-1$, such that for any field extension $L/F$ one has that $\alpha_L=0$ if and only if $X(L)\neq \phi$, for a detailed construction of such $X$ we refer the reader to \cite{Weib}, \cite{SJ}.
\newpage
\begin{thm}[Rost]\label{11}
Let $\alpha \in H^n(F)$, $n\geq 2$, be a non trivial symbol. Then:
\begin{enumerate}
  \item There exist a a geometrically irreducible, smooth projective generic splitting variety, $X$, for $\alpha$ of dimension $p^{n-1}~-~1$.
  \item Every element of $\bar{A_0}(X,K_1)$ is of the form $[x,\lambda]$ where $x$ is a closed point of $X$ of degree $p$ and $\lambda$ is an element of $F(x)$.
\end{enumerate}

\end{thm}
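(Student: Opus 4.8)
The plan is to invoke Rost's theory of norm varieties rather than to reprove it from scratch; the statement is a packaging, in the form most convenient for the paper, of results appearing in the references \cite{Weib} and \cite{SJ}. For part (1), I would recall the explicit construction: given a nontrivial symbol $\alpha = a_1 \cdots a_n \in H^n(F)$ with $F$ of characteristic prime to $p$ (and containing $\rho$), one builds a \emph{norm variety} $X_\alpha$ — a smooth, projective, geometrically irreducible $F$-variety of dimension $p^{n-1}-1$ that is a $p$-generic splitting variety for $\alpha$, meaning that for every field extension $L/F$, $\alpha_L = 0$ if and only if $X_\alpha$ has a closed point of degree prime to $p$ over $L$ (and, since we may pass to a prime-to-$p$ closure, a rational point). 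For $p=2$ this is the (quadric associated to the) Pfister form $\langle\!\langle a_1,\dots,a_{n-1}\rangle\!\rangle$ and $a_n$; for general $p$ one uses the Rost–Suslin–Joukhovitski construction built inductively from Rost's degree-$p$ "splitting schemes" and symmetric powers, with the dimension count $p^{n-1}-1$ following from the recursion. Smoothness and geometric irreducibility are part of that construction, so I would simply cite it.

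For part (2), the key input is Rost's computation of the reduced group of zero-cycles $\bar A_0(X,K_1)$ for a norm variety. The relevant fact is that for the norm variety $X$ of a symbol $\alpha \in H^n$, $n \ge 2$, the group $\bar A_0(X, K_1)$ is generated by classes $[x,\lambda]$ where $x$ ranges over closed points of degree $p$ and $\lambda \in F(x)^\times$ — indeed one shows every closed point contributes, via the transfer/norm maps in $K$-theory and the multiplicativity structure, something expressible through a single degree-$p$ point. The cleanest route is: (i) recall that $X$ becomes a "$\nu_{n-1}$-variety" (its mod-$p$ characteristic numbers realize the Rost invariant), (ii) use that the degree map $\deg \colon A_0(X) \to \Z$ has image $p\Z$ (this is exactly the $p$-genericity together with nontriviality of $\alpha$: $X$ has no rational point over $F$ but has points of degree $p$), and (iii) invoke the surjectivity statement from \cite{SJ}/\cite{Weib} that reduced $K_1$-zero-cycles on a norm variety are spanned by the $[x,\lambda]$ with $\deg x = p$. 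Then every element of $\bar A_0(X,K_1)$ is, modulo the subgroup killed in passing to the reduced group, of the claimed form $[x,\lambda]$ for a single degree-$p$ point.

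The main obstacle — the only genuinely deep point — is part (2): the structural description of $\bar A_0(X,K_1)$ is a substantial theorem of Rost (with the write-up due to Suslin–Joukhovitski), resting on the machinery of cycle modules, the Rost degree formula, and the special geometry of norm varieties. I do not intend to reprove it; the honest statement is that Theorem~\ref{11}(2) \emph{is} that theorem, specialized, and I would give precise pointers to \cite{Weib} and \cite{SJ} where the existence of the norm variety and the generation of $\bar A_0(X,K_1)$ by degree-$p$ points with $K_1$-coefficients are established. The remaining work in this section — deducing Lemma~\ref{L1} from this — is where the paper's own contribution lies, and that is taken up next.
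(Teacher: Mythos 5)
Your proposal matches the paper exactly: Theorem~\ref{11} is stated there as an imported result of Rost, with no proof given beyond the attribution and the pointers to \cite{SJ} and \cite{Weib}, which is precisely what you do. Your supplementary sketch of the norm-variety construction and of Rost's computation of $\bar{A_0}(X,K_1)$ is accurate as far as it goes and does not change the substance; citing the literature here is the correct and intended move.
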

\begin{thm}[\cite{SJ} Theorem A.1 combined with Proposition 2.9]\label{22}
Let $X$ be a generic splitting variety of dimension $p^{n-1}-1$ for a non zero symbol $\alpha \in H^n(F)$. Then the following sequence is exact:
$$0\rightarrow \bar{A_0}(X,K_1)\overset{N}{\longrightarrow} F^{\times} \overset{\cdot \alpha}{\longrightarrow}H^{n+1}(F)$$
where $N$ is the norm map taking $[x,\lambda]$ to $\N_{F(x)/F}(\lambda)$.
\end{thm}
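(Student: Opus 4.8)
The plan is to verify the two exactness assertions separately. Identifying $F^\times$ with $H^1(F)$ by Kummer theory, the map $\cdot\alpha$ is cup product with the symbol $\alpha\in H^n(F)$, so exactness at $F^\times$ means $\ker(\cdot\alpha)=\operatorname{im}(N)$, while the injectivity of $N$ (exactness at $\bar{A_0}(X,K_1)$) is the leftmost statement. I would first dispose of the inclusion $\operatorname{im}(N)\subseteq\ker(\cdot\alpha)$, which is elementary, and then confront the reverse inclusion together with injectivity, which carry all the geometric content coming from the splitting property of $X$.

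For the easy inclusion I would use \Tref{11}(2): every class in $\bar{A_0}(X,K_1)$ is of the form $[x,\lambda]$ with $x$ a closed point of $X$ of degree $p$ and $\lambda\in F(x)^\times$, and $N([x,\lambda])=\N_{F(x)/F}(\lambda)$. The closed point $x$ furnishes a canonical $F(x)$-rational point of $X$, so the generic-splitting property forces $\res_{F(x)/F}(\alpha)=\alpha_{F(x)}=0$. The projection formula then gives
$$\N_{F(x)/F}(\lambda)\cdot\alpha=\Cor_{F(x)/F}\big(\lambda\cdot\res_{F(x)/F}(\alpha)\big)=0,$$
so every norm lies in the kernel of multiplication by $\alpha$, and in particular the displayed sequence is a complex.

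The substance is the reverse inclusion together with injectivity. Here I would realize $A_0(X,K_1)$ and $A_0(X\times X,K_1)$ as the degree-zero homology of the Rost cycle complexes for the cycle module $K^M_1/p$, so that the pushforwards $(pr_i)_*$ and the norm $N$ become the functorial transfers. The reduced quotient $\bar{A_0}(X,K_1)$, obtained by killing the image of $(pr_1)_*-(pr_2)_*$, is precisely the device that should make $N$ injective: two zero-cycles differing by such a boundary must have equal norm, and conversely a cycle of trivial norm should be forced into this image via a correspondence on $X\times X$. Establishing that $N$ is injective onto $\ker(\cdot\alpha)$ is exactly what the cited results provide: \cite{SJ} Proposition 2.9 packages the norm/projection-formula compatibility underlying the complex condition, and \cite{SJ} Theorem A.1 computes $\bar{A_0}(X,K_1)$ via the special (Rost) correspondence attached to the norm variety, simultaneously yielding the injectivity of $N$ and the surjectivity of $N$ onto $\ker(\cdot\alpha)$.

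The main obstacle is this last step: one must show that the relations defining $\bar{A_0}$ are rich enough to annihilate every zero-cycle whose norm is trivial, while no spurious relations collapse the image below $\ker(\cdot\alpha)$, i.e.\ that given $\lambda\in F^\times$ with $\lambda\cdot\alpha=0$ one can build a zero-cycle on $X$ of norm $\lambda$. This is a genuine norm principle for norm varieties, and it rests on the existence and multiplicative properties of the special correspondence together with the Voevodsky--Rost input behind the norm residue isomorphism. I would not attempt to reconstruct this machinery; I would invoke \cite{SJ} for it, contributing only the elementary complex direction and the reduction to their Theorem A.1 and Proposition 2.9.
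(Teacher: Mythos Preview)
The paper does not supply its own proof of this theorem: it is stated purely as a citation, attributed in the theorem header to \cite{SJ}, Theorem~A.1 combined with Proposition~2.9, and no argument is given in the body. So there is no ``paper's proof'' to compare your proposal against.

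That said, your proposal is a correct and reasonable unpacking of the citation. Your verification of the complex condition $\operatorname{im}(N)\subseteq\ker(\cdot\alpha)$ via the projection formula is sound (and does not even need the degree-$p$ refinement from \Tref{11}(2); any closed point $x$ of $X$ yields $X(F(x))\neq\emptyset$ and hence $\alpha_{F(x)}=0$). You are also right that the well-definedness of $N$ on the reduced group $\bar{A_0}(X,K_1)$ is automatic, since pushforward to $\operatorname{Spec} F$ equalizes $(pr_1)_*$ and $(pr_2)_*$. For the substantive direction---injectivity of $N$ and surjectivity onto $\ker(\cdot\alpha)$---you correctly identify that this is the genuine content of the Suslin--Joukhovitski results and appropriately defer to \cite{SJ}; the paper does exactly the same, only without your explanatory gloss.
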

Finally we have the following Theorem:
\begin{thm}[\cite{SJ} Theorem 5.6] \label{33}
Let $E_1, \cdots, E_n$ be a sequence of cyclic splitting fields of
degree $p$ for a non-trivial symbol $\alpha \in H^n(F)$. Then there exist a sequence $s_1, \cdots, s_n\in F^{\times}$ such that:
\begin{enumerate}
  \item $\alpha=s_1\cdots s_n$
  \item For every $1\leq i \leq n$ one has that $E_i$ splits $s_1\cdots s_i\in H^i(F)$
\end{enumerate}
\end{thm}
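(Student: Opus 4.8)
The plan is to argue by induction on $n$, using the generic splitting variety of $\alpha$ together with Theorems \ref{11} and \ref{22} to control its presentations. Throughout I identify a cyclic degree-$p$ extension $E_i/F$ with its character $e_i\in H^1(F)$, so that (by the long exact sequence attached to the cyclic extension, valid since $\mu_p\subset F$) a class $\xi\in H^k(F)$ satisfies $\res[E_i/F](\xi)=0$ exactly when $\xi\in e_i\cdot H^{k-1}(F)$. In particular, ``$E_i$ splits the symbol $s_1\cdots s_i$'' means precisely $s_1\cdots s_i\in e_i\cdot H^{i-1}(F)$. For $n=1$ there is nothing to prove: set $s_1=\alpha$, and $E_1$ splits $s_1=\alpha$ by hypothesis.

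The inductive step I would reduce to the following \emph{Crux Lemma}: if a symbol $\alpha\in H^n(F)$ is split by the cyclic degree-$p$ fields $E_1,\dots,E_{n-1}$, then $\alpha$ admits a presentation $\alpha=\beta\cdot s_n$ with $s_n\in H^1(F)$ and $\beta\in H^{n-1}(F)$ a symbol that is \emph{simultaneously} split by all of $E_1,\dots,E_{n-1}$. Granting this, apply the induction hypothesis to the symbol $\beta$ and the fields $E_1,\dots,E_{n-1}$ to obtain $\beta=s_1\cdots s_{n-1}$ with $E_i$ splitting $s_1\cdots s_i$ for $1\le i\le n-1$; then $\alpha=s_1\cdots s_{n-1}s_n$ is the desired presentation. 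Indeed the flag conditions for $i\le n-1$ come directly from the induction hypothesis, while the top condition that $E_n$ splits $s_1\cdots s_n=\alpha$ is part of the hypothesis of the theorem. Observe that $E_n$ enters only through this last, automatic, condition, and plays no role in the Crux Lemma.

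For the Crux Lemma I would work with a generic splitting variety $X$ for $\alpha$, of dimension $p^{n-1}-1$. Since $\alpha\neq 0$ and each $E_i$ splits $\alpha$, one has $X(E_i)\neq\emptyset$; an $E_i$-point has residue field $F(x_i)$ with $F\subseteq F(x_i)\subseteq E_i$, so $F(x_i)=F$ or $F(x_i)\cong E_i$, and the former is excluded (it would give $X(F)\neq\emptyset$, whence $\alpha=0$). Thus each $E_i$ yields a closed point $x_i\in X$ of degree $p$ with $F(x_i)\cong E_i$. By Theorem \ref{11} these are exactly the representatives of $\bar A_0(X,K_1)$, and by the exactness of
$$0\to \bar A_0(X,K_1)\overset{N}{\longrightarrow} F^{\times}\overset{\cdot\,\alpha}{\longrightarrow}H^{n+1}(F)$$
in Theorem \ref{22}, the norms $\N_{F(x_i)/F}(\lambda)$ are precisely the slots $t\in F^{\times}=H^1(F)$ for which $t\cdot\alpha=0$. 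The strategy is to use these relations to rewrite $\alpha$, trading slots so as to force the characters $e_1,\dots,e_{n-1}$ all to divide a single codimension-one sub-symbol $\beta$, the compatibility of the successive rewritings being governed by the rigidity of presentations of $\alpha$ encoded in $\bar A_0(X,K_1)$.

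The main obstacle is exactly this simultaneity: producing one sub-symbol $\beta\in H^{n-1}(F)$ split by all of $E_1,\dots,E_{n-1}$ at once. Splitting by a single field $E_i$ is the (single-field) absorption $\alpha=e_i\cdot(\text{symbol})$, but the naive idea of absorbing one field and recursing fails, since after removing a slot the remaining fields need no longer split the cofactor -- already for $n=2$ two distinct degree-$p$ splitting fields of a symbol algebra need not be complementary slots. It is precisely to make the several absorptions mutually compatible that the norm-variety input of Theorems \ref{11} and \ref{22} is indispensable, and this is where essentially all of the geometric content of the argument is concentrated.
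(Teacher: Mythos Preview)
The paper does not prove Theorem~\ref{33}; it is quoted verbatim from \cite[Theorem~5.6]{SJ} and then invoked as a black box in the proof of Lemma~\ref{L1}. So there is no ``paper's own proof'' to compare your attempt against.

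That said, your proposal is not a proof but an outline with a self-acknowledged gap. The inductive reduction to your Crux Lemma is correct: if one can peel off a last slot $s_n$ so that the remaining symbol $\beta\in H^{n-1}$ is split by $E_1,\dots,E_{n-1}$ simultaneously, then induction finishes the job, and $E_n$ indeed only enters at the top of the flag. But you do not prove the Crux Lemma. You point to Theorems~\ref{11} and~\ref{22} and say ``this is where essentially all of the geometric content of the argument is concentrated,'' and stop there. As stated in the paper, those two theorems only tell you which elements $t\in F^{\times}$ satisfy $t\cdot\alpha=0$ (namely, norms from degree-$p$ residue fields of closed points on $X$); they do not, by themselves, let you rewrite $\alpha$ with a prescribed codimension-one sub-symbol split by several given fields at once. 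The passage from ``each $E_i$ gives a degree-$p$ point on $X$'' to ``there is a single $\beta$ split by all $E_i$'' is exactly the hard step, and nothing in your text bridges it.

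In the actual proof in \cite{SJ}, this step relies on substantially more of the norm-variety machinery than what is quoted here --- in particular the Chain Lemma and the specialization/multiplication principles for $\nu_{n-1}$-varieties --- none of which can be extracted from Theorems~\ref{11} and~\ref{22} alone. So your diagnosis of where the difficulty lies is accurate, but the proposal as written is a reduction to an unproved lemma of the same strength as the theorem itself.
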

We are now ready to prove Lemma \ref{L1}\\
\noindent \emph{proof of Lemma 3.1.}\\%\begin{proof}~of Lemma 3.1
In section $6$, we show that the finite characteristic case follows from the characteristic zero case, hence we assume $\ch(F)=0$. The $(\Leftarrow)$ direction is clear. For the other direction, let $\alpha\in H^n(F)$ be a non zero symbol and $b\in F^{\times}$ such that $\alpha \cdot b=0 \in H^{n+1}$. Then, by the exact sequence of Theorem \ref{22} we get that $b$ lies in the image of the norm map. By Theorem \ref{11} we get that there is an element $[x,\lambda]\in \bar{A_0}(X,K_1)$ such that $F(x)=F[\sqrt[p]{s}]$ is a splitting field of $\alpha$ of degree $p$ and $N([x,\lambda])=b$, equivalently $s\cdot b=0$. Finally applying Theorem \ref{33} to the splitting field $F(x)$ yields the existence of a sequence $s_1, \cdots, s_n\in F^{\times}$ such that $\alpha~=~s_1\cdots s_n$ with $s_1~=~s$  and the Lemma follows.\indent \indent \indent \indent \indent \indent \indent \indent \indent\indent \indent \indent \indent \indent\indent\indent\indent\indent\indent\indent\indent\indent \ \ \ \ \ \ \ \ \ \ \ \ \ $\Box$
% \end{proof}
\newpage
\section{Applications of \Lref{L1}}
As a first application of \Lref{L1} we recall and then generalize the following useful Lemma of Tignol (see \cite{Tig1} Lemma 1).

\begin{lem}[Tignol]\label{Tignol}
Let $F$ be a field with $\ch(F)\neq 2$ and $K~=~F[\sqrt{d}]$ a field extension of degree $2$. Let $A=(a,k)_{2,K}$ be a quaternion algebra over $K$, where $a\in \mul{F}$ and $k\in \mul{K}$. %with Galois group $\langle \s_K \rangle$
Then, $$A=\res_{K/F}((a,c)_{2,F}) \ ; c\in \mul{F} \hbox{ \ if and only if \ } \Cor_{K/F}(A)=F \hbox{ \ in \ } \Br(F).$$
\end{lem}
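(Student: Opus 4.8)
The plan is to prove the two implications separately. The easy direction is purely formal: if $A=\res_{K/F}((a,c)_{2,F})$ for some $c\in\mul F$, then $\Cor_{K/F}(A)=\Cor_{K/F}\res_{K/F}((a,c)_{2,F})=[K:F]\,(a,c)_{2,F}=2\,(a,c)_{2,F}$, which is trivial in $\Br(F)$ because every quaternion algebra has exponent dividing $2$.

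For the converse, assume $\Cor_{K/F}(A)$ is split. First I would dispose of the case $a\in\mul{K}^2$: then $A$ itself is split, $\Cor_{K/F}(A)$ is automatically trivial, and $A=\res_{K/F}((a,1)_{2,F})$, so $c=1$ works. Assume henceforth that $a\notin\mul{K}^2$; in particular $a\notin\mul{F}^2$, so that $M:=F[\sqrt{a}]$ is a quadratic field extension of $F$. Since $a\in\mul F$, a projection-formula computation gives $\Cor_{K/F}(A)=(a,\N_{K/F}(k))_{2,F}$ (the second slot $k$ corestricting to $\N_{K/F}(k)$), so the hypothesis says exactly that the quaternion algebra $(a,\N_{K/F}(k))_{2,F}$ is split; equivalently, its reduced norm form $\langle 1,-a,-\N_{K/F}(k),a\N_{K/F}(k)\rangle$ over $F$ is hyperbolic.

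The heart of the matter is to realize some $c\in\mul F$ as a value, over $K$, of the binary form $p:=\langle k,-ak\rangle$ (which is $k$ times the norm form of $L:=K[\sqrt{a}]$). Indeed, for $c\in\mul F$ one has $(a,k)_{2,K}=(a,c)_{2,K}$ precisely when $k/c\in\N_{L/K}(\mul L)$, i.e. precisely when $p$ represents $c$ over $K$; and once such a $c$ is in hand, $A=(a,k)_{2,K}=(a,c)_{2,K}=\res_{K/F}((a,c)_{2,F})$ and we are done. To produce the value I would apply the Scharlau transfer to $p$ along the $F$-linear map $t\colon K\to F$, $t(x_1+x_2\sqrt{d})=x_2$, whose kernel is exactly $F$: thus $t_*(p)$ is the $F$-quadratic form $v\mapsto t(p(v))$ on $K^2$, regarded as a $4$-dimensional $F$-vector space. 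A direct diagonalization—writing $k=k_1+k_2\sqrt{d}$ and using that $t_*(\langle u\rangle_K)\cong\langle u_2,-u_2\N_{K/F}(u)\rangle$ for $u=u_1+u_2\sqrt{d}$ with $u_2\neq 0$—shows that $t_*(p)$ is isometric to $k_2$ times the reduced norm form of $(a,\N_{K/F}(k))_{2,F}$; when $k_2=0$ one has $k\in\mul F$ and $c=k$ works directly. By the previous paragraph this transfer is hyperbolic, hence isotropic, so there is $0\neq v\in K^2$ with $t(p(v))=0$. Since $a\notin\mul{K}^2$, the form $p$ is anisotropic over $K$, whence $p(v)\neq 0$; and then $t(p(v))=0$ forces $p(v)\in\K(t)\cap\mul K=\mul F$. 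Setting $c:=p(v)$ completes the proof.

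The step I expect to be the real work is this transfer computation—identifying $t_*(p)$ with a nonzero scalar multiple of the reduced norm form of $(a,\N_{K/F}(k))_{2,F}$—together with the bookkeeping of the degenerate configurations ($a$ a square in $K$, and $k\in\mul F$). Everything else is routine: the projection formula for corestriction, the equivalence between splitness of $(a,b)_{2,F}$ and hyperbolicity of its norm form, invariance of hyperbolicity under nonzero scaling, anisotropy of $p$ over $K$, and the identity $\K(t)=F$.
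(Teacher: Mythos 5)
Your argument is correct and complete. There is, however, nothing in the paper to compare it against: the paper does not prove this lemma but simply recalls it from Tignol's article (Lemma 1 of the cited reference) and uses it as a black box. Your proof is a standard quadratic-form-theoretic one and all the steps check out. The easy direction via $\Cor_{K/F}\circ\res_{K/F}=2$ and exponent $2$ is fine; the projection formula correctly turns the hypothesis into hyperbolicity of the norm form $\langle 1,-a,-\N_{K/F}(k),a\N_{K/F}(k)\rangle$; the reduction of the problem to representing an element of $\mul{F}$ by $p=k\langle 1,-a\rangle$ over $K$ is the right reformulation of the common-slot condition; the transfer computation $t_*(\langle u\rangle)\cong\langle u_2,\,-u_2\N_{K/F}(u)\rangle$ (for $u_2\neq 0$) and the resulting identification of $t_*(p)$ with $k_2$ times that norm form are both correct; and the anisotropy of $p$ over $K$ when $a$ is a nonsquare in $K$ is exactly what is needed to convert isotropy of the transferred form into a nonzero value of $p$ lying in $\K(t)=F$. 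You also correctly isolate the two degenerate configurations ($a$ a square in $K$, and $k\in\mul{F}$). The only cosmetic remark is that you could avoid the explicit diagonalization by invoking Frobenius reciprocity for the Scharlau transfer, $t_*(q\otimes\res_{K/F}(b))\cong t_*(q)\otimes b$ applied to $q=\langle k\rangle$ and $b=\langle 1,-a\rangle$, together with the fact that transfers preserve hyperbolicity; but your explicit computation makes the argument self-contained, which is arguably preferable.
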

%\begin{proof} This is well known but for completeness we include a proof.
%Clearly if $A$ is a restricted quaternion then $\Cor_{K/F}(A)=F$ in $\Br(F)$. For the other direction, assume $\Cor_{K/F}(A)=F$ in $\Br(F)$.
%Now, %in $\Br(F)$ we have $F=\Cor_{K/F}(A)=\Cor_{K/F}((a,k)_{2,K})=(a,\N_{K/F}(k))_{2,F}$. Hence
%in $\Br(K)$ we have $K=\res_{K/F}\Cor_{K/F}(A)=\Tr(A)=(a,k)_{2,K}+(a,\s(k))_{2,K}=(a,k)_{2,K}-(a,\s(k))_{2,K}=(a,k/\s(k))_{2,K}.$ Thus, by Wedderburn  (\cite{Rowen}, Remark 24.26) there exist an element $u\in L=KM$ where $M=F[\sqrt{a}]$, such that $\N_{L/K}(u)=k/\s(k)$, which implies that $N_{L/F}(u)=1$. Notice that $L/F$ is Galois with group $\langle \s_K ,\s_M \rangle$. By Hilbert $90$ there exist an element $m\in M$ such that, $\s(m)/m=\N_{L/M}(u)$. Then, by \cite{AS}, $L$ and the elements $m,k,u$ define an abelian crossed product
%$B=L[z_1,z_2 \ | \ z_1^2=k, z_2^2=m, z_1l=\s_M(l) z_1, z_2l=\s_K(l)z_2, z_1z_2=uz_2z_1]$. Notice that $\res_{K/F}(B)=L[z_1]=(a,k)_{2,K}=A$.
%But as $2B=\Cor_{K/F}\res_{K/F}(B)=\Cor_{K/F}(A)=F$ in $\Br(F)$, a Theorem of Tignol on decomposition groups extending a Theorm of Albert on bi-quaternions (\cite{Tig} Corollary 2.8 see also \cite{Row} for a similar statement) tells us that
%$B=(d,s)_{2,F}\otimes (a,c)_{2,F}$ from which we clearly get, $A=\res_{K/F}(B)=\res_{K/F}((a,c)_{2,F})$, as needed.
%\end{proof}
We generalize it in the following way,
\begin{thm}\label{Tig}
Let $F$ be a field with $\ch(F)\neq 2$, and $K/F$ a field extension of degree $2$. Let $\alpha=\alpha'\cdot k\in H^{n+1}(K,\Z/2\Z)$ where, $\alpha'~\in~H^n(F)$ is a symbol and $k\in H^1(K)$. Then, $$\alpha=\res_{K/F}(\alpha'\cdot c) \ ; c\in H^1(F) \ \hbox{if and only if} \ \Cor_{K/F}(\alpha)=0.$$
\end{thm}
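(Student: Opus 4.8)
The plan is to reduce to the prime-to-$2$-closed case and then run an argument parallel to the proof of \Lref{L1}, using the corestriction--restriction machinery together with a Tignol-style norm computation. First, by \Pref{p}-type reasoning (corestriction and the projection formula), I would reduce to the case where $F$ is prime to $2$ closed; note that $K/F$ stays a quadratic field extension after this reduction as long as $\alpha \neq 0$, and if $\alpha = 0$ both sides are trivial. The direction "$\alpha = \res_{K/F}(\alpha' \cdot c) \Rightarrow \Cor_{K/F}(\alpha) = 0$" is immediate: by the projection formula $\Cor_{K/F}(\res_{K/F}(\alpha' \cdot c)) = (\alpha' \cdot c)\cdot \Cor_{K/F}(1) = 2(\alpha' \cdot c) = 0$ since we are working mod $2$.

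For the substantive direction, assume $\Cor_{K/F}(\alpha) = 0$ with $\alpha = \alpha' \cdot k$. Write $K = F[\sqrt{d}]$. The key observation is that $\Cor_{K/F}(\alpha' \cdot k) = \alpha' \cdot \Cor_{K/F}(k)$ by the projection formula (since $\alpha'$ comes from $F$), and $\Cor_{K/F}(k) = N_{K/F}(k) \in H^1(F)$ is the class of the norm $N_{K/F}(\tilde k)$ for a representative $\tilde k \in \mul{K}$. So the hypothesis says $\alpha' \cdot N_{K/F}(\tilde k) = 0$ in $H^{n+1}(F)$. Now apply \Lref{L1} to the symbol $\alpha' \in H^n(F)$ and $b := N_{K/F}(\tilde k) \in H^1(F)$: there is a presentation $\alpha' = s_1 \cdots s_n$ with $s_n \cdot N_{K/F}(\tilde k) = 0 \in H^2(F)$. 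Equivalently, writing $s_n$ for a representative in $\mul{F}$, the quaternion algebra $(s_n, N_{K/F}(\tilde k))_{2,F}$ splits, i.e. $N_{K/F}(\tilde k)$ is a norm from $F[\sqrt{s_n}]/F$.

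The remaining step is exactly Tignol's Lemma \Lref{Tignol} applied in the right place, and I expect this to be the main obstacle to getting the bookkeeping right. We have reduced $\alpha$ to $\alpha' \cdot k = (s_1 \cdots s_{n-1}) \cdot (s_n \cdot k)$, where $s_n \cdot k \in H^2(K, \Z/2\Z)$ is the class of the quaternion algebra $(s_n, k)_{2,K}$ with $s_n \in \mul{F}$ and $k \in \mul{K}$. Its corestriction to $F$ is $(s_n, N_{K/F}(\tilde k))_{2,F}$, which we have just shown is trivial; so by Tignol's Lemma there is $c \in \mul{F}$ with $(s_n,k)_{2,K} = \res_{K/F}((s_n,c)_{2,F})$ in $\Br(K)$, i.e. $s_n \cdot k = \res_{K/F}(s_n \cdot c)$ in $H^2(K)$. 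Cupping on the left with the symbol $s_1 \cdots s_{n-1} \in H^{n-1}(F)$ and using that restriction is a ring homomorphism commuting with cup products by classes from $F$, we get
$$\alpha = (s_1\cdots s_{n-1})\cdot(s_n\cdot k) = \res_{K/F}\big((s_1\cdots s_{n-1})\cdot(s_n\cdot c)\big) = \res_{K/F}(\alpha' \cdot c),$$
which is the desired conclusion. The points requiring care are: verifying that \Lref{L1} is legitimately applicable (it needs $F$ prime to $2$ closed, hence the initial reduction, and it needs $\alpha'$ nonzero, which holds since $\alpha \neq 0$ forces $\alpha' \neq 0$); checking that the cup-product/corestriction identities hold at the cochain or cohomology level with the correct signs (harmless mod $2$); and making sure the reduction to the prime-to-$2$-closed case is compatible with the statement, i.e. that a witness $c$ over a finite odd-degree extension descends — this last point is handled by a corestriction/transfer argument exactly as in \Pref{p}, using that $[E:F]$ odd makes $\res_{E/F}$ injective on mod $2$ cohomology.
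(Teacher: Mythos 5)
Your proposal is correct and follows essentially the same route as the paper's proof: reduce to the prime-to-$2$-closed case by a corestriction/projection-formula transfer, compute $\Cor_{K/F}(\alpha)=\alpha'\cdot \N_{K/F}(k)$, invoke \Lref{L1} to split off a slot $s$ of $\alpha'$ with $s\cdot \N_{K/F}(k)=0$, apply Tignol's Lemma to the quaternion algebra $(s,k)_{2,K}$, and cup back with the remaining slots. The only (immaterial) difference is notational: the paper writes $\alpha'=\alpha''\cdot s$ with the distinguished slot unnamed rather than $s_1\cdots s_n$ with the condition on $s_n$.
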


\begin{proof}
Again in section $6$, we show that the finite characteristic case follows from the characteristic zero case, hence we assume $\ch(F)~=~0$. First notice that if $L/F$ is a field extension of odd degree, then  $$\alpha~=~\res_{K/F}~(\alpha'~\cdot~ c)~ \ ~ ;~c~\in~H^1(F)  \ \hbox{if and only if}  \ \alpha_{KL}=\res_{KL/L}(\alpha'_{L}\cdot c') \ ; c'\in H^1(L).$$ Indeed, one direction is clear and the other follows from the fact that $\alpha'\in H^n(F)$, $[L:F]$ is odd and the projection formula.
Also we clearly have  $$\Cor_{K/F}(\alpha)=0 \ \hbox{if and only if} \ \Cor_{KL/L}(\alpha)=0$$ since $[L:F]$ is odd. Thus we may restrict scalars to a prime to $2$ closure, $L$ of $F$. %that $F$ is prime to $2$ closed.
The first direction of the Lemma $(\Rightarrow)$ is clear. For the other direction, assume $\Cor_{K/F}(\alpha)=0$ in $H^{n+1}(F)$. Going up to $L$ we have,
$0~=~\Cor_{KL/L}~(\alpha)=\alpha'_L\cdot \N_{KL/L}(k)$, hence by Lemma \ref{L1} we see that $\alpha'_L=\alpha'' \cdot s$ where $\alpha'' \in H^{n-1}(L)$ a symbol and $s~\cdot~\N_{KL/L}~(k)~=~0$. Now consider the quaternion algebra $A=(s,k)_{2,KL}$, clearly we have $\Cor_{KL/L}(A)~=~(~s~,~\N_{KL/L}~(k))_{2,L}=L$ in $\Br(L)$. Thus, by \Lref{Tignol} we get that
$A=\res_{KL/L}(s,t) \ ; \ t\in \mul{L}$. Combining things we get, $\alpha_{KL}=\alpha'' \cdot s\cdot k=\alpha'' \cdot s\cdot t=\alpha'_{KL}\cdot t$ and as mentioned above this implies that $\alpha_{K}=\res_{K/F}(\alpha' \cdot d) \ ; \ d\in \mul{F}$ as needed.
\end{proof}
%\section{$\3MP$ with symbol entries}
A second application of Lemma \ref{L1} will be to $\3MP$ of weight $(n,1,m)$ where the first and last entries are symbols.
\begin{thm}\label{n1m}
Let $F$ be a field of characteristic prime to $p$, and $\langle \alpha, a, \beta \rangle$ be a defined $\3MP$ of weight $(n,1,m)$ where $\alpha, \beta$ are symbols. Then, $~0~\in~\langle \alpha, a, \beta \rangle.$
\end{thm}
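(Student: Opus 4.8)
The plan is to peel the two outer symbols down to single degree-one classes using \Lref{L1}, invoke the already-settled weight-$(1,1,1)$ case of the Min\'a\v c--T\^an conjecture, and then re-inflate using the linearity and multiplicativity properties of triple Massey products recorded in \Tref{KT1}.

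First I would make a few reductions. If any of $\alpha$, $a$, $\beta$ vanishes, a defining system built from the zero cochain (with the auxiliary cochains taken to be zero cocycles) gives $c(A)=0$, so $0\in\langle\alpha,a,\beta\rangle$ by \Pref{3mp}; hence we may assume all three are nonzero. Since the product is defined, $\alpha\cdot a=0$ and $a\cdot\beta=0$. By \Pref{p} it suffices to prove $0\in\langle\alpha_L,a_L,\beta_L\rangle$ for $L$ a prime to $p$ closure of $F$; restriction sends symbols to symbols and keeps the $\3MP$ defined, so we may assume $F$ itself is prime to $p$ closed --- exactly the hypothesis needed to apply \Lref{L1}. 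Applying \Lref{L1} to $\alpha\cdot a=0$ yields a presentation $\alpha=s_1\cdots s_n$ with $s_n\cdot a=0$; put $\sigma:=s_n\in H^1$ and $\alpha':=s_1\cdots s_{n-1}\in H^{n-1}$ (a symbol, read as $1\in H^0$ if $n=1$), so that $\alpha=\alpha'\cdot\sigma$. Applying \Lref{L1} to $\beta\cdot a=0$ (equivalent to $a\cdot\beta=0$) gives $\beta=t_1\cdots t_m$ with $t_m\cdot a=0$; put $\tau:=t_m\in H^1$ and $\beta':=t_1\cdots t_{m-1}\in H^{m-1}$, so $\beta=\beta'\cdot\tau$.

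Now $\sigma\cdot a=0$ and $a\cdot\tau=0$, so $\langle\sigma,a,\tau\rangle$ is a defined triple Massey product of weight $(1,1,1)$, and by the resolution of that case of the conjecture (\cite{EM2}, \cite{Mat14}, \cite{MT14c}) we have $0\in\langle\sigma,a,\tau\rangle$. It remains to transport this upward along \Tref{KT1}. Multiplying $\alpha'\in H^{n-1}$ into the first entry keeps the product defined and, by relation $(2)$, gives $\alpha'\cdot\langle\sigma,a,\tau\rangle\subseteq(-1)^{n-1}\langle\alpha'\sigma,a,\tau\rangle=(-1)^{n-1}\langle\alpha,a,\tau\rangle$ (here $\alpha'\sigma=s_1\cdots s_n=\alpha$ on the nose). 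Then multiplying $\beta'\in H^{m-1}$ into the last entry of the now defined $\langle\alpha,a,\tau\rangle$ gives, by relation $(1)$, $\langle\alpha,a,\tau\rangle\cdot\beta'\subseteq\langle\alpha,a,\tau\beta'\rangle=\pm\langle\alpha,a,\beta\rangle$, using $\tau\beta'=(-1)^{m-1}\beta$ and multilinearity. Composing the two inclusions, $\pm\,\alpha'\cdot\langle\sigma,a,\tau\rangle\cdot\beta'\subseteq\langle\alpha,a,\beta\rangle$, and since $0\in\langle\sigma,a,\tau\rangle$ and $\alpha'\cdot 0\cdot\beta'=0$, we conclude $0\in\langle\alpha,a,\beta\rangle$.

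The genuinely hard input --- the weight-$(1,1,1)$ theorem --- is cited, and the structural input \Lref{L1} is proved in the preceding section, so I expect the only delicate point to be the bookkeeping in the last paragraph: verifying that each intermediate product (notably $\langle\alpha,a,\tau\rangle$) is actually defined before invoking the relations of \Tref{KT1}, tracking the graded-commutativity signs (which are irrelevant for membership of $0$), and checking that the edge cases $n=1$ or $m=1$, where $\alpha'$ or $\beta'$ collapses to $1\in H^0$, are harmless (in those cases the corresponding step is simply omitted).
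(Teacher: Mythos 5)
Your proposal is correct and follows essentially the same route as the paper: reduce to a prime to $p$ closed field via \Pref{p}, apply \Lref{L1} to both outer symbols to extract degree-one factors annihilating the middle entry, invoke the weight-$(1,1,1)$ theorem, and re-inflate with \Tref{KT1}. The only cosmetic difference is which slot of the presentation of $\beta$ you place adjacent to the middle entry, which affects nothing since signs are irrelevant for membership of $0$.
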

\begin{proof}
%First recall that defined $\3MP$ of weight $(1,1,1)$ always contain zero ().
Using \Pref{p} we may assume $F$ is prime to $p$ closed. Now, by definition we have $\alpha \cdot d=d\cdot \beta=0$. Thus Lemma \ref{L1} yields presentations, $\alpha=a_1\cdots a_n; \ \beta=b_1\cdots b_m$ such that $a_n\cdot d=0$ and $d\cdot b_1=0$. Hence the $\3MP$ of weight $(1,1,1)$, $\langle a_n, d, b_1 \rangle$ is defined and contains zero by the main Theorem of \cite{EM2},\cite{Mat14}.
Now using \Tref{KT1} we see that: $0\in (-1)^{n-1}a_1\cdots a_{n-1}\langle a_n, d, b_1 \rangle b_2\cdots b_n\subseteq \langle \alpha, d, b_1 \rangle b_2\cdots b_n\subseteq \langle \alpha, d, \beta \rangle$ and we are done.
\end{proof}
%We will switch between the cohomology ring and the Milnor K-ring freely.

%For this subsection we assume $p=2$, i.e. $H^i=H^i(G_F,\Z/2\Z)$, unless otherwise stated.
\section{$\3MP$ and multiplication by a symbol}
We start this section by recalling the description of the kernel of multiplication by a symbol, denoted $\Ker(\alpha)$, given in \cite{OVV} for the case~$p=2$ and in \cite{MS} for an arbitrary prime.
Let $F$ be a field of characteristic zero, and~$\alpha =\{a_1,\dots, a_n \}\in k_n(F)=K^M _n(F)/2K^M _n(F) $ a symbol.
Define $Q_\alpha$ to be the projective quadric of dimension $2^{n-1}-1$ defined by the form $q_\alpha=\langle \langle a_1,\dots,a_{n-1}\rangle \rangle - \langle a_n \rangle$.
This quadric is called the small Pfister quadric or the norm quadric associated with the symbol $\alpha$ and it is a generic splitting variety for $\alpha$. Denote by $F(Q_\alpha)$ the function field of $Q_\alpha$ and by $(Q_\alpha)_{(0,\leq 2)}$ the set of closed points of $Q_\alpha$ of degree at most~$2$.
\begin{thm}[Orlov, Vishik, Voevodsky]
Let $F$ be a field of characteristic zero, and $\alpha \in K^M _n(F) $ a symbol. Then for every $i\geq 0$, the following sequence is exact,
$$\oplus_{x\in (Q_\alpha)_{(0,\leq 2)}} k_{i}(F(x)) \xrightarrow{\sum \Tr_{F(x)/F}} k_i(F) \xrightarrow{\cdot \alpha} k_{i+n}(F)\xrightarrow{\res_{F(Q_{\alpha})/F}} k_{i+n}(F(Q_{\alpha})).$$
\end{thm}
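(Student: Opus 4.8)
This is the theorem of Orlov--Vishik--Voevodsky; its proof is not elementary, and I would cite \cite{OVV} for it, but here is the shape of their argument, since everything downstream in this section rests on it. The case $\alpha=0$ is degenerate: $Q_\alpha$ then has a rational point, restriction to $F(Q_\alpha)$ is split injective, and the identity is the transfer along a degree-$1$ closed point, so the four-term sequence is trivially exact. Assume henceforth $\alpha\neq 0$; then $Q_\alpha$ is anisotropic and a finite extension $L/F$ satisfies $\alpha_L=0$ iff $Q_\alpha(L)\neq\emptyset$. The first step is to leave Milnor $K$-theory: one identifies $k_j(E)$ with the $\mathbb{Z}/2$-motivic cohomology group $H^{j,j}(E,\mathbb{Z}/2)$ for every field $E$, so that the two ``ends'' of the sequence become statements about the motivic cohomology of $\operatorname{Spec}(F)$, of $X:=Q_\alpha$, and of the \v{C}ech simplicial scheme $\check{C}X$, with $(\check{C}X)_m=X^{m+1}$.

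Exactness at $k_{i+n}(F)$ is the ``splitting half'', the equality $\alpha\cdot H^{i,i}(F)=\ker\!\big(H^{i+n,i+n}(F)\to H^{i+n,i+n}(F(X))\big)$. I would derive it from the augmentation triangle $\check{M}(\check{C}X)\to\mathbb{Z}\to\widetilde{M}(\check{C}X)\xrightarrow{+1}$ in $DM(F,\mathbb{Z}/2)$, together with Voevodsky's proof of the Bloch--Kato conjecture, which supplies the crucial vanishing: the reduced motivic cohomology $\widetilde{H}^{p,q}(\check{C}X,\mathbb{Z}/2)$ vanishes for $q\leq n-1$ and is $\mathbb{Z}/2$ in bidegree $(n+1,n)$, generated by a canonical class attached to $\alpha$. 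One also needs Rost's motivic decomposition of $X$, namely that the Rost motive $\mathcal{M}_\alpha$ is a direct summand of $M(X)$ which, for $p=2$, splits over a splitting field as $\mathbb{Z}\oplus\mathbb{Z}(2^{n-1}-1)[2^{n}-2]$. Chasing the long exact motivic-cohomology sequences attached to the augmentation triangle and to the distinguished triangles expressing $M(X)$ through $\check{M}(\check{C}X)$ then produces the stated equality.

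Exactness at $k_i(F)$ is the ``transfer half'', $\operatorname{Im}(\sum_x\Tr_{F(x)/F})=\ker(\cdot\,\alpha)$. Since $X$ is a quadric and $\alpha\neq 0$, its only closed points of degree $\leq 2$ are the degree-$2$ points $\operatorname{Spec}(L)$ with $[L:F]=2$ and $\alpha_L=0$, so the claim reads $\ker(\cdot\,\alpha)=\sum_{[L:F]=2,\ \alpha_L=0}\Tr_{L/F}(k_i(L))$. The inclusion $\supseteq$ is the projection formula: $\Tr_{L/F}(y)\cdot\alpha=\Tr_{L/F}(y\cdot\alpha_L)=0$. The reverse inclusion comes again from the same triangle, reading off the degree-$\leq 2$ part of $M(X)$: a class killed by $\alpha$ lifts to a combination of the degree-$2$ cycle classes, i.e.\ of the transfers in question. (For $i=1$ this recovers, up to squares, the exactness at $F^{\times}$ in \Tref{22} combined with \Tref{11}; the content of \cite{OVV} beyond those is the passage to all $i$ and the exactness at $k_{i+n}(F)$.)

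The difficulty is entirely concentrated in the inputs attributed to Voevodsky and Rost: the computation of $H^{*,*}(\check{C}X,\mathbb{Z}/2)$ in the critical range (morally degree-$n$ Bloch--Kato, resting on motivic Steenrod operations, the Milnor operations $Q_i$, motivic Margolis homology, and the existence and structure of norm varieties and the Rost motive) and Rost's motivic decomposition of the norm quadric. I would black-box all of this; what remains --- assembling the four-term exact sequence from the augmentation triangle and the motivic triangles for $M(X)$ --- is a diagram chase.
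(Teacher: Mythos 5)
The paper states this result as an imported theorem, citing \cite{OVV} without giving any proof, and your proposal likewise defers to that reference while sketching the motivic-cohomology architecture of the original argument; this matches the paper's treatment, and your sketch of the inputs (the \v{C}ech simplicial scheme computation, the Rost motive, and the projection-formula direction of the transfer half) is a fair account of where the content lies.
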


Now, recalling the fact that for a field extension $L/F$ of degree $2$, $k_n(L)$ is generated by symbols of the form $\{f_1, \dots, f_{n-1},l\}$, where $f_1, \dots, f_{n-1}\in F^{\times}$ and $l\in L^{\times}$
%(Bass and Tate? H. Bass and J. Tate, The Milnor ring of a global field, Lecture Notes in Math. 342 (1973), 349-446. Corollary 5.3)
one obtains the following variant of Theorem $3.3$ in \cite{OVV}:
\begin{thm}\label{1}
Let $F$ be a field of characteristic zero, and $\alpha \in K^M _n(F) $ a symbol.
The Kernel of multiplication by $\alpha$, $\Ker(\alpha)$, is generated as an abelian group by all elements of the form $\beta\cdot f$, where $\beta$ is a symbol and $f\in \N_{L/F}(L^{\times})$, where $L$ runs over all splitting fields of $\alpha$ of degree at most $2$. That is $$\Ker(\alpha)=\sum_{L} k(F) \cdot \N_{L/F}(L^{\times})$$ where $L$ is as above and $k(F)$ is the mod $2$ Milnor $K$-ring of $F$.
\end{thm}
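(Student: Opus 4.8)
The plan is to derive this from the Orlov--Vishik--Voevodsky exact sequence (the theorem immediately preceding) together with the structural fact about $k_n(L)$ for quadratic extensions $L/F$. First I would recall the exact sequence with $i$ chosen so that $i+n$ is the relevant degree; concretely, taking $i = 0$ in a shifted form, or more precisely working in the degree where $\alpha$ lives, the sequence
\[
\bigoplus_{x\in (Q_\alpha)_{(0,\leq 2)}} k_{m}(F(x)) \xrightarrow{\sum \Tr_{F(x)/F}} k_m(F) \xrightarrow{\cdot \alpha} k_{m+n}(F)
\]
is exact, so $\Ker(\cdot\alpha \colon k_m(F)\to k_{m+n}(F))$ is exactly the image of $\sum_x \Tr_{F(x)/F}$, where $x$ ranges over closed points of the norm quadric $Q_\alpha$ of degree at most $2$. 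Since $Q_\alpha$ is a generic splitting variety for $\alpha$, a closed point of degree $1$ or $2$ gives a splitting field $F(x) = L$ of $\alpha$ with $[L:F] \leq 2$, and conversely every such splitting field of degree $\leq 2$ arises this way (a degree-$1$ point means $\alpha = 0$ already, the trivial case; otherwise $L$ is quadratic). So $\Ker(\alpha) = \sum_L \Tr_{L/F}(k_m(L))$, summed over splitting fields $L$ of degree $\leq 2$.

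Next I would pin down the transfer $\Tr_{L/F}\colon k_m(L)\to k_m(F)$ on generators. The key input quoted just before the theorem is that when $[L:F]=2$, $k_m(L)$ is generated by symbols $\{f_1,\dots,f_{m-1},l\}$ with $f_i\in F^\times$ and $l\in L^\times$. Applying the projection formula for the transfer, $\Tr_{L/F}(\{f_1,\dots,f_{m-1},l\}) = \{f_1,\dots,f_{m-1}, \N_{L/F}(l)\}$, since the $f_i$ come from $F$. Writing $\beta = \{f_1,\dots,f_{m-1}\}\in k_{m-1}(F)$ (a symbol) and $f = \N_{L/F}(l)\in \N_{L/F}(L^\times)$, we get exactly an element of the form $\beta\cdot f$ with $f$ a norm from a degree-$\leq 2$ splitting field. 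Summing over all generators and all such $L$ yields $\Ker(\alpha) = \sum_L k(F)\cdot \N_{L/F}(L^\times)$, which is the claimed description (the full ring $k(F)$ appears rather than just symbols because one takes the abelian group generated by all the $\beta\cdot f$, and products of symbols span $k(F)$; but each individual generator is $\text{symbol}\cdot\text{norm}$).

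The main obstacle I anticipate is the bookkeeping at the two ends of the index range: handling $m=0$ (where ``$\beta$ a symbol'' degenerates, and $\Ker(\alpha)$ in $k_0(F)=\Z/2$ is either $0$ or everything), and making sure the degree-$1$ point case is correctly absorbed into the statement. There is also a minor subtlety in verifying that the exact sequence I want is an instance of the stated one with the correct shift — the stated sequence has $k_i(F)\xrightarrow{\cdot\alpha}k_{i+n}(F)$, so I should simply set the ambient degree to be $i$ and read off $\Ker(\cdot\alpha)$ as the image of the transfer maps, which is immediate from exactness at the middle term. I do not expect the projection-formula computation or the identification of $(Q_\alpha)_{(0,\leq 2)}$ with degree-$\leq 2$ splitting fields to present real difficulty, as both are standard; the content is entirely in invoking the OVV sequence and the generation statement for $k_n(L)$.
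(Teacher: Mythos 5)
Your proposal is correct and matches the paper's own (largely implicit) derivation: the paper obtains this theorem exactly by combining the Orlov--Vishik--Voevodsky exact sequence with the fact that $k_m(L)$ for a quadratic extension is generated by symbols $\{f_1,\dots,f_{m-1},l\}$ with $f_i\in F^\times$, and then applying the projection formula to the transfer. Your handling of the degree-$1$ point and the identification of residue fields of degree-$\leq 2$ closed points of $Q_\alpha$ with the degree-$\leq 2$ splitting fields is the right bookkeeping.
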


For an arbitrary prime $p$, one has the following description of the kernel of multiplication by a symbol (see \cite{MS} for more details).
\begin{thm}[Merkurjev, Suslin]\label{2}
Let $F$ be a field of characteristic prime to $p$ and $\theta\in H_{et} ^n(F,\mu_p^{\otimes n})$ a symbol, where $\mu_p$ denotes the Galois module of all $p$-th roots of unity. Then for an arbitrary $k\in \mathbb{N}$, there is an exact sequence
$$\coprod_L H_{et} ^k(L,\mu_p^{\otimes k})  \xrightarrow{\sum \N_{L/F}} H_{et} ^k(F,\mu_p^{\otimes k})\xrightarrow{\cdot \theta} H_{et} ^{k+n}(F,\mu_p^{\otimes k+n})\xrightarrow{ \prod_{E} \res_{E/F}}\prod_E H_{et} ^{k+n}(E,\mu_p^{\otimes k+n})$$
where the coproduct is taken over all finite splitting field extensions $L/F$ for $\theta$ and the product is taken over all splitting field
extensions $E/F$.\end{thm}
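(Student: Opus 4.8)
\emph{Proof proposal.} This is the arbitrary-$p$ analogue of the Orlov--Vishik--Voevodsky exact sequence stated above, due to Merkurjev and Suslin \cite{MS}, and the plan is to establish it by the same scheme of proof, with the norm quadric $Q_\alpha$ (of dimension $2^{n-1}-1$) replaced by a Rost norm variety $X=X_\theta$: a geometrically irreducible, smooth, projective generic splitting variety for $\theta$ of dimension $p^{n-1}-1$, as in \Tref{11}. (Its existence, due to Rost, is only available in characteristic zero, which is precisely why the reduction to characteristic zero carried out in Section~6 is needed.) The two inputs beyond the $p=2$ case are: the Bloch--Kato conjecture (Voevodsky--Rost--Weibel), hence the Beilinson--Lichtenbaum comparison, which identifies the mod-$p$ \'etale cohomology groups in the sequence with mod-$p$ motivic cohomology $H^{*,*}_M(-,\Z/p\Z)$ in the relevant bidegrees; and Voevodsky's computation of the motivic cohomology of the \v{C}ech simplicial scheme $\check{C}(X)$ in terms of the Rost motive $\mathcal{M}_{n-1}$. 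Granting these, the exact sequence follows by chasing localization (Gysin) triangles.

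First I would prove exactness at $H^{k+n}_{et}(F,\mu_p^{\otimes k+n})$, i.e.\ that $\Ker\!\big(\prod_E\res_{E/F}\big)=H^k_{et}(F,\mu_p^{\otimes k})\cdot\theta$. The inclusion $\supseteq$ is immediate, since $\theta$ restricts to zero over every splitting field $E$. For $\subseteq$, an element $\xi$ killed by $\res_{E/F}$ for all splitting $E$ is in particular killed over the function field $F(X)$, $X$ being a splitting variety. Transporting $\xi$ to motivic cohomology via Beilinson--Lichtenbaum and using the localization sequence for $X$, an element of $H^{k+n,k+n}_M(F,\Z/p\Z)$ that dies over $F(X)$ comes from the motivic cohomology of $\check{C}(X)$; Voevodsky's distinguished triangle relating $M(\check{C}(X))$, $\mathcal{M}_{n-1}$ and Tate twists then identifies the relevant group with $H^{k}_M(F,\Z/p\Z)\cdot\theta$, giving $\xi=\eta\cdot\theta$.

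Next I would prove exactness at $H^k_{et}(F,\mu_p^{\otimes k})$, i.e.\ $\Ker(\cdot\theta)=\sum_L \N_{L/F}H^k_{et}(L,\mu_p^{\otimes k})$. Here $\supseteq$ is the projection formula together with $\theta_L=0$ for $L$ a splitting field. For $\subseteq$, since $\N_{L/F}\circ\res_{L/F}$ is multiplication by $[L:F]$ and these groups are $p$-torsion, a standard transfer argument lets me restrict scalars to a prime-to-$p$ closure of $F$ without changing the statement; there I would invoke the higher-$K$, arbitrary-$p$ form of \Tref{22} together with \Tref{11} --- Rost's exact sequence $\bar{A}_0(X,K_k)\xrightarrow{N}H^k_{et}(F)\xrightarrow{\cdot\theta}H^{k+n}_{et}(F)$ and the fact that every class in $\bar{A}_0(X,K_k)$ has the form $[y,\lambda]$ with $y$ a closed point of $X$ of degree $p$ and $\lambda\in K_k(F(y))$ --- to conclude that any $x$ with $x\cdot\theta=0$ is a sum of transfers $\N_{F(y)/F}(\lambda)$ from degree-$p$ splitting subfields $F(y)$ of $\theta$.

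The hard part is none of the diagram-chasing above but the two deep geometric inputs it rests on: the existence of the norm variety $X_\theta$ together with the sharp control of $\bar{A}_0(X_\theta,K_*)$ (Rost), and the Bloch--Kato/Beilinson--Lichtenbaum package identifying \'etale and motivic cohomology together with the structure of the Rost motive (Voevodsky--Rost). In practice, therefore, the proof amounts to citing \cite{MS} for the statement, with the underlying machinery being the circle of ideas recorded in \Tref{11}, \Tref{22} and \cite{SJ} --- which is exactly what is done here.
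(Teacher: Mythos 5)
The paper gives no proof of this statement at all: it is quoted verbatim from Merkurjev--Suslin \cite{MS}, exactly as your closing paragraph anticipates. Your sketch of the machinery behind it (Rost norm varieties, the Bloch--Kato/Beilinson--Lichtenbaum identification, and the motivic cohomology of the \v{C}ech simplicial scheme) is a fair account of what \cite{MS} actually does --- the only imprecision being that \Tref{11}(2) and \Tref{22} as recorded here concern $\bar{A}_0(X,K_1)$ only, so the ``higher-$K$'' form you invoke is part of what must be imported from \cite{MS} rather than a corollary of the statements in this paper --- and since both you and the paper ultimately defer to the citation, the approaches agree.
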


%\begin{rem}
%In Theorem \ref{2} we can not find an argument to show it is enough to restrict to splitting fields of degree at most $p$. If one can prove that, one wi
%\end{rem}

\subsection{$\3MP$ where the middle entry is a symbol} \ \\
We are now ready to apply the above to $\3MP$ where the middle slot is a symbol.
Let $F$ be a field of characteristic zero, and $p$ an arbitrary prime.

\begin{defn}
A basic element of $Ker(\alpha)$ is an element of the form $\{f_1, \dots, f_{n-1},f\}$ , where $f_1, \dots, f_{n-1}\in F^{\times}$, $f\in \N_{L/F}(L^{\times})$ and $L$ is a degree $p$ splitting field for $\alpha$.
\end{defn}

\begin{thm}\label{Basic}
Let $F$ be a field with $\ch(F)\neq p$ and $\langle x,\alpha, y \rangle $ be a defined $\3MP$ where, $\alpha \in H^k$ is a symbol and $x\in H^n; y\in H^m$ are basic elements of $\Ker(\alpha)$. Then $0\in \langle x,\alpha, y\rangle.$
\end{thm}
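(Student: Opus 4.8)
The plan is to reduce Theorem~\ref{Basic} to the already-established Theorem~\ref{n1m} on $\3MP$ of weight $(n,1,m)$. Since $x=\{f_1,\dots,f_{n-1},f\}$ and $y=\{g_1,\dots,g_{m-1},g\}$ are basic elements of $\Ker(\alpha)$, we may write $x=\phi\cdot f$ and $y=\psi\cdot g$ where $\phi=\{f_1,\dots,f_{n-1}\}$ and $\psi=\{g_1,\dots,g_{m-1}\}$ are symbols, $f\in\N_{L/F}(L^\times)$ and $g\in\N_{L'/F}((L')^\times)$ with $L,L'$ degree-$p$ splitting fields for $\alpha$. By \Pref{p} we may assume $F$ is prime to $p$ closed, so each splitting field is of the form $L=F(\sqrt[p]{a})$ for some $a$, and $f\in\N_{L/F}(L^\times)$ means exactly that $a\cdot f=0$ in $H^2$; similarly $g\cdot b=0$ where $L'=F(\sqrt[p]{b})$. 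Since $L$ splits $\alpha$, we have $\res_{L/F}\alpha=0$, i.e. $\alpha$ lies in $\Ker$ of restriction along $L/F$; as $F$ is prime to $p$ closed this forces $a\cdot\alpha=0$ in $H^{k+1}$ (a nonzero symbol $\alpha$ over a prime-to-$p$ closed field is split by the extension $F(\sqrt[p]{a})$ iff $\alpha\cdot a=0$). Likewise $b\cdot\alpha=0$.

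So far we have $f,g,a,b\in H^1$ with $a\cdot f=0$, $a\cdot\alpha=0$, $b\cdot\alpha=0$, $b\cdot g=0$. The key point is now to produce a \emph{defined} $\3MP$ of weight $(n,1,m)$ with symbol outer entries whose product relates to $\langle x,\alpha,y\rangle$. Concretely, I would run Theorem~\ref{KT1} to absorb the symbol factors: starting from the (to-be-constructed) product $\langle a,\alpha,b\rangle$ — which is \emph{not} quite what we want — one instead observes that $\langle x,\alpha,y\rangle=\langle \phi f,\alpha,\psi g\rangle$ and repeatedly applies the relations of \Tref{KT1} to slide the symbols $\phi,\psi$ out of the outer slots, at the cost of working with $\langle f,\alpha,g\rangle$ up to multiplication by $\phi$ (on the left) and $\psi$ (on the right) and up to indeterminacy. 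Thus it suffices to show $0\in\langle f,\alpha,g\rangle$ for $f,g\in H^1$, $\alpha\in H^k$ a symbol, with $f\cdot\alpha=0=\alpha\cdot g$. But this is precisely a defined $\3MP$ of weight $(1,k,1)$ with middle entry a symbol — and one must check that the needed vanishing $\langle f,\alpha,g\rangle\ni 0$ follows. The cleanest route: apply \Lref{L1} to the symbol $\alpha$ and the $H^1$ classes $f$ and $g$. From $f\cdot\alpha=0$ (reading $\alpha\cdot f=0$) we get a presentation $\alpha=s_1\cdots s_k$ with $s_k\cdot f=0$; but I need a \emph{single} presentation compatible with both $f$ and $g$, so instead I apply \Lref{L1} to get $\alpha=s_1\cdots s_k$ with $s_1\cdot g=0$ (adjusting the order of \Lref{L1}'s conclusion), then re-split: having fixed $s_1$ with $s_1\cdot g = 0$, apply \Lref{L1} again — or rather Theorem~\ref{33} directly — to the two splitting fields to obtain a presentation $\alpha=s_1\cdots s_k$ with $s_1\cdot f=0$ and $s_k\cdot g=0$ simultaneously. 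Then $\langle f,s_1\rangle$ already "splits" and the chain $f,s_1,s_2,\dots,s_k,g$ admits a defining system whose Massey product contains zero by iterating \Tref{KT1} down to the weight-$(1,1,1)$ product $\langle s_k, \cdot\,,g\rangle$ or directly: $\langle f,\alpha,g\rangle\supseteq \langle f, s_1, s_2\cdots s_k g\rangle$-type containments chain back to $\langle f,s_1,\ast\rangle$ which contains $0$ since $f\cdot s_1=0$ trivially forces...

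The honest summary of the argument: I expect the main obstacle to be exactly the \emph{compatibility of presentations} — \Lref{L1} gives, for a given $b\in H^1$ annihilating $\alpha$, \emph{some} presentation $\alpha=s_1\cdots s_n$ with $s_n\cdot b=0$, but the $\3MP$ machinery needs one presentation simultaneously adapted to both outer entries $f$ and $g$ (and to the symbols $\phi,\psi$). Overcoming this requires going back to Theorem~\ref{33}: feed it the \emph{sequence} of splitting fields $(L, E_2,\dots,E_{k-1}, L')$ — where $L=F(\sqrt[p]{a})$ kills $\alpha\cdot$-nothing-yet but produces $s_1$ with the right annihilation, and $L'$ at the other end — to produce in one stroke a presentation $\alpha=s_1\cdots s_k$ with $L$ splitting $s_1$ and $L'$ splitting $s_1\cdots s_k$, hence (since $L'$ also splits $\alpha=s_1\cdots s_k$) splitting the trailing factor after dividing out. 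Once such a presentation is in hand, the rest is a bookkeeping application of \Tref{KT1}: the weight-$(1,1,\dots,1)$ product $\langle f,s_1,\dots,s_k,g\rangle$ is defined (consecutive cup products vanish by construction), contains $0$ because a $\3MP$ of weight $(1,1,1)$ within it contains $0$ by \cite{EM2},\cite{Mat14}, and its iterated brackets collapse via \Tref{KT1} into $\phi\cdot\langle f,\alpha,g\rangle\cdot\psi\subseteq\langle x,\alpha,y\rangle$, with the indeterminacy absorbed by \Pref{3mp}. Finally \Pref{p} transfers the conclusion $0\in\langle x,\alpha,y\rangle$ back from the prime-to-$p$ closure to $F$, and \Sref{6} handles the characteristic-$p$-prime-to-$p$ case, completing the proof.
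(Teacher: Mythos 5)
Your opening reduction is the same as the paper's: writing $x=\phi\cdot f$, $y=\psi\cdot g$ with $\phi,\psi$ symbols and $f,g$ norms from degree-$p$ splitting fields of $\alpha$, and using \Tref{KT1}(1),(2) to conclude that $0\in\phi\,\langle f,\alpha,g\rangle\,\psi\subseteq\langle x,\alpha,y\rangle$ once $0\in\langle f,\alpha,g\rangle$ is known. The genuine gap is in your proof of $0\in\langle f,\alpha,g\rangle$, and it occurs at exactly the point you flag as "the main obstacle." First, the simultaneous presentation does not come out of \Tref{33}: that theorem, fed a sequence $E_1,\dots,E_k$ of splitting fields, produces $s_1,\dots,s_k$ with $E_i$ splitting the \emph{partial product} $s_1\cdots s_i$; the condition on the last field $E_k$ (that it split $s_1\cdots s_k=\alpha$) is vacuous and gives no control over the single slot $s_k$, and there is no "dividing out the trailing factor" operation on symbols. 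Second, even granting a presentation with $s_1\cdot f=0$ and $s_k\cdot g=0$, your collapse of the $(k+2)$-fold product $\langle f,s_1,\dots,s_k,g\rangle$ to the triple product $\langle f,\alpha,g\rangle$ is unsupported: \Tref{KT1} only relates Massey products with the \emph{same} number of slots, the definedness of a $(k+2)$-fold product requires a full coherent defining system (not just vanishing of consecutive cup products), and the vanishing of higher weight-$(1,\dots,1)$ Massey products is precisely the open Min\'{a}\v{c}--T\^{a}n conjecture, so it cannot be invoked. Note also that $\langle f,\alpha,g\rangle$ is a $\3MP$ of weight $(1,k,1)$, i.e.\ essentially the statement of \Cref{1n1}, which in the paper is \emph{deduced from} this theorem -- so appealing to that vanishing would be circular.

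What is actually needed to close the gap splits by parity of $p$. For $p$ odd the paper simply cites Corollary 3.4 of \cite{3MPwithW}, which says that every defined $\3MP$ of weight $(1,k,1)$ contains zero. For $p=2$ -- the hard case, which your proposal does not address with any working tool -- the paper argues directly: using \Lref{L1} it writes $\alpha=a\cdot\alpha'$ with $f\cdot a=0$, builds an explicit defining system for $\langle f,\alpha,g\rangle$ with value $A=\varphi_{fa}\cdot\alpha'\cdot g+f\cdot\varphi_{\alpha g}$, restricts $A$ to the quadratic extension $K$ cut out by $f$ (where the first summand becomes a genuine cohomology class $(\varphi_{fa})_K\cdot\alpha'_K\cdot g_K$ of the required shape), observes $\Cor_{K/F}\res_{K/F}(A)=2A=0$, and then applies the generalized Tignol \Tref{Tig} to conclude $\res_{K/F}(A)\in\res_{K/F}(H^k\cdot g)$, whence $A\in f\cdot H^k+H^k\cdot g$ and \Pref{3mp} finishes. \Tref{Tig} is the decisive ingredient here and is absent from your argument; without it, or the external odd-$p$ result, the proposal does not reach the conclusion.
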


\begin{proof}
By Proposition \ref{p}, we may assume $F$ is prime to $p$ closed.
As $x,y$ are basic we can write $x=x'\cdot r; y=s\cdot y'$ where $x'~\in~H^{n-1};y'~\in~H^{m-1}$ are symbols and $r\in N_{L/F}(L^{\times}); s\in \N_{T/F}(T^{\times})$ for $L,T$ splitting fields for $\alpha$ of degree $p$. The first thing to notice it that $r,s\in \Ker(\alpha)$, hence the $\3MP$ $\langle r,\alpha, s \rangle $ is defined.
We now split to cases: The first case is when $p$ is an odd prime. In this case by Corollary  3.4 of \cite{3MPwithW} we have $0\in \langle r,\alpha, s \rangle$. Thus by Theorem \ref{KT1} we get,
$$0\in (-1)^{n-1}x'\langle r,\alpha, s \rangle y' \subseteq (-1)^{2(n-1)}\langle x'\cdot r,\alpha, s\cdot y' \rangle=\langle x,\alpha, y\rangle$$ and we are done.
Now consider the case $p=2$. As in the first case it will be enough to show $0\in \langle r,\alpha, s \rangle$.
Note that as $r\cdot \alpha =0$, Lemma \ref{L1} says that we can write $\alpha=a\cdot \alpha'$ where $a\in H^1; \ \alpha'\in H^{k-1} \hbox{a symbol}$ and $r\cdot a=0$.
Thus there exist $\varphi_{ra} \in C^1$ such that $\delta(\varphi_{ra})=r\cdot a$. Now define $\varphi_{r\alpha}=\varphi_{ra}\cdot \alpha'\in C^{k}$ and compute $\delta(\varphi_{r\alpha})=ra\alpha'=r\alpha$.
Also as $\alpha s=0$ there exist $\varphi_{\alpha s} \in C^k$ such that $\delta(\varphi_{\alpha s})=\alpha s $. Collecting the above we constructed a defining system for the $\3MP$ $\langle r,\alpha, s \rangle $ with value $A=\varphi_{r\alpha}\cdot s +r\cdot \varphi_{\alpha s}$. We will now use $A$ to prove  $0\in \langle r,\alpha, s \rangle$.
First, letting $K$ be the field extension corresponding to $r$ under the Kummer map, we notice that by construction $\res_{\ker(r)}(\varphi_{ra})$ is an element of $H^1(K)$, indeed $\delta_{\ker(r)} (\varphi_{r a})=(r\cdot a)_{\ker(r)}=0$.
Now consider $B_K=\res_{K/F}(A)=\res_{K/F}(\varphi_{r\alpha}\cdot s +r\cdot \varphi_{\alpha s})=(\varphi_{ra})_{K}\cdot \alpha'_{K}\cdot s_{K}$, and compute $\Cor_{K/F} (B)=\Cor_{K/F}\res_{K/F}(A)=2A=0$.
Thus applying \Tref{Tig} we get that $B_K=\res_{K/F}(d\cdot\alpha'\cdot s)$ and we write $B_F=d\cdot\alpha'\cdot s\in H^k\cdot s$.
Now as $\res_{K/F}(A-B)=0$, we get that $A-B\in r\cdot H^k$ from which we conclude that $A\in r\cdot H^k+H^k\cdot s$ and we are done.
\end{proof}

\begin{cor}\label{1n1}
Let $F$ be a field of characteristic prime to $p$, and $\langle x,\alpha, y \rangle $ be a defined $\3MP$ of weight $(1,n,1)$ where $\alpha$ is a symbol. Then~,~$0~\in~\langle x,\alpha, y\rangle.$
\end{cor}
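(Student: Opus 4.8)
The plan is to reduce \Cref{1n1} to \Tref{Basic} by showing that every element of $\Ker(\alpha)$ in degree $1$ is automatically a basic element, so that the hypotheses of \Tref{Basic} are met with no extra work. Concretely, suppose $\langle x,\alpha,y\rangle$ is a defined $\3MP$ of weight $(1,n,1)$ with $\alpha\in H^n$ a symbol; by definition $x\cdot\alpha=0$ and $\alpha\cdot y=0$, so $x,y\in H^1\cap\Ker(\alpha)$. By \Pref{p} we may pass to a prime to $p$ closure $L$ of $F$ and work there. Now apply \Tref{2} (or \Tref{1} when $p=2$) in degree $k=1$: the kernel of multiplication by $\alpha$ on $H^1$ is generated by the images of the norm maps $\N_{L'/F}\colon H^1(L')\to H^1(F)$, where $L'$ runs over finite splitting fields of $\alpha$. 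Since we are over a prime to $p$ closed field, the relevant splitting fields can be taken of degree exactly $p$ (a splitting field extension of degree prime to $p$ would split $\alpha$ already, forcing $\alpha=0$, and degree-$p^j$ towers reduce to degree-$p$ steps); thus each of $x$ and $y$ lies in the subgroup generated by $\N_{L_i/F}(L_i^\times)$ for degree-$p$ splitting fields $L_i$.

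The next step is to handle the fact that $x$ (resp. $y$) may be a sum of norms from several distinct degree-$p$ splitting fields, not a single basic element — but a degree-$1$ basic element of $\Ker(\alpha)$ is precisely an element of the form $\{f\}$ with $f\in\N_{L/F}(L^\times)$, $L$ a degree-$p$ splitting field (the $f_1,\dots,f_{n-1}$ part of the definition is empty in degree $1$). So I would write $x=\sum_{i=1}^s x_i$ and $y=\sum_{j=1}^t y_j$ with each $x_i,y_j$ a degree-$1$ basic element of $\Ker(\alpha)$. By \Pref{3mp}(1), $\langle x,\alpha,y\rangle = c(A)+xH^1+yH^{1}$ for any defining system $A$, and by the $\Z/p\Z$-bilinearity of the construction of defining systems one can build a defining system for $\langle x,\alpha,y\rangle$ out of defining systems for the $\langle x_i,\alpha,y_j\rangle$: choosing cochains $\varphi_{x_i\alpha}$ with $\delta\varphi_{x_i\alpha}=x_i\alpha$ and $\varphi_{\alpha y_j}$ with $\delta\varphi_{\alpha y_j}=\alpha y_j$, the sums $\varphi_{x\alpha}=\sum_i\varphi_{x_i\alpha}$ and $\varphi_{\alpha y}=\sum_j\varphi_{\alpha y_j}$ give a defining system for $\langle x,\alpha,y\rangle$ whose value is $c(A)=\sum_{i}\varphi_{x_i\alpha}\cdot y + x\cdot\sum_j\varphi_{\alpha y_j}$. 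Each piece $\varphi_{x_i\alpha}\cdot y_j + x_i\cdot\varphi_{\alpha y_j}$ is the value of a defining system for $\langle x_i,\alpha,y_j\rangle$, which by \Tref{Basic} contains $0$, hence lies in $x_iH^1+y_jH^{1}\subseteq xH^1+yH^1$ modulo the remaining cross terms; summing and absorbing everything into $xH^1+yH^1$ shows $c(A)\in xH^1+yH^1$, i.e. $0\in\langle x,\alpha,y\rangle$ by \Pref{3mp}(2).

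I expect the main obstacle to be the bookkeeping in that last step: when $x$ and $y$ are genuine sums, the "cross terms" $\varphi_{x_i\alpha}\cdot y_j$ for $i$ fixed but $j$ varying do not individually come from a defining system for a single $\langle x_i,\alpha,y_j\rangle$, so one has to be a little careful to regroup the cochain-level expression $c(A)=\bigl(\sum_i\varphi_{x_i\alpha}\bigr)\cdot\bigl(\sum_j y_j\bigr)+\bigl(\sum_i x_i\bigr)\cdot\bigl(\sum_j\varphi_{\alpha y_j}\bigr)=\sum_{i,j}\bigl(\varphi_{x_i\alpha}\cdot y_j+x_i\cdot\varphi_{\alpha y_j}\bigr)$ as an honest sum of values of defining systems for the $\langle x_i,\alpha,y_j\rangle$, and then invoke \Tref{Basic} termwise. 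Once that regrouping is in place the conclusion is immediate, and the degree-$1$ reduction that each summand really is basic is the only other thing that needs to be said. Alternatively, if one prefers to avoid the cochain regrouping, one can instead argue by induction on $s+t$ using \Tref{KT1}(1)--(2) to peel off one basic summand at a time, writing $\langle x,\alpha,y\rangle\supseteq\langle x_1,\alpha,y\rangle + \langle x-x_1,\alpha,y\rangle$ modulo indeterminacy and reducing to the base case handled by \Tref{Basic}; I would present whichever of these is cleanest, but the substance is the same.
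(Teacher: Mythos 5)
Your reduction to a prime to $p$ closure via \Pref{p} and the appeal to \Tref{Basic} are both the right moves, but the middle of your argument has a genuine gap. After writing $x=\sum_i x_i$ and $y=\sum_j y_j$ as sums of basic elements, the combined defining system has value $c(A)=\sum_{i,j}c_{i,j}$ with each $c_{i,j}\in x_iH^{n}+y_jH^{n}$ (note also that for weight $(1,n,1)$ the indeterminacy lives in $H^n$, not $H^1$ as you wrote). You then claim this can be ``absorbed'' into $xH^n+yH^n$, but $x_iH^n$ is not contained in $\bigl(\sum_i x_i\bigr)H^n$: the subgroup $\sum_{i,j}(x_iH^n+y_jH^n)$ is in general strictly larger than the indeterminacy $xH^n+yH^n$, and landing in the former does not give $0\in\langle x,\alpha,y\rangle$. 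This is precisely the obstruction that limits \Tref{BKer} to the weaker containment $\langle\alpha,\beta,\gamma\rangle\subseteq\sum_{i,j}(\alpha_i\cdot H^{k+m-1}+\gamma_j\cdot H^{n+k-1})$ rather than a vanishing statement. The alternative you sketch (peeling off one summand at a time with \Tref{KT1}) runs into the same problem, since those relations only hold modulo the sum of the indeterminacies of the pieces, which again is too large.

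The fix is that no sum decomposition is needed. Over the prime to $p$ closure, the exact sequence of \Tref{22}, combined with Rost's \Tref{11}(2) asserting that every element of $\bar{A_0}(X,K_1)$ is a \emph{single} class $[x,\lambda]$ with $x$ a closed point of degree $p$, shows that every element of $H^1\cap\Ker(\alpha)$ is a single norm $\N_{F(x)/F}(\lambda)$ from a single degree $p$ splitting field, hence is itself a basic element of $\Ker(\alpha)$. So $x$ and $y$ are individually basic and \Tref{Basic} applies directly; this is the paper's (two-line) proof. Your citation of \Tref{2} (or \Tref{1}) only gives that the kernel is \emph{generated} by such norms, which is what forces you into the sum decomposition and the gap above.
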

\begin{proof}
By \Pref{p} we may go up to a prime to $p$ closure, $L/F$ of $F$. Over $L$, \Tref{22} implies that $x,y$ are basic elements of $\Ker(\alpha)$ so we are done by \Tref{Basic}.
\end{proof}
\begin{rem}
Notice that for an odd prime $p$ \Cref{1n1} is weaker than Corollary $3.4$ of \cite{3MPwithW} where it is proved that any defined $\3MP$ of weight $(1,k,1)$ contains zero.
\end{rem}

%\begin{cor}\label{cor}
%Let $p$ be an arbitrary prime, $\alpha \in H^n; \ \gamma\in H^m$ be symbols and $b\in H^1$. If the $\3MP$ $\langle \alpha,b,\gamma \rangle$ is defined, then it contains zero.
%\end{cor}
%\begin{proof}
%Note that by Lemma \ref{L1} we see that $\alpha, \gamma$ are basic, hence we are done by Theorem \ref{Basic}.
%\end{proof}

\begin{defn}
For a symbol $\alpha$ define $\BKer(\alpha)$ to be the subgroup of $\Ker(\alpha)$ generated by all basic elements of $Ker(\alpha)$.
\end{defn}

\begin{rem}
Note that for $p=2$, $\BKer(\alpha)=\Ker(\alpha)$ by Theorem \ref{1}. For arbitrary prime $p$ this holds at least for $\alpha \in H^1$ by \Tref{22}.
The author believes one can prove that over a prime to $p$ closed field one can prove that $\BKer(\alpha)=\Ker(\alpha)$ for an arbitrary prime.
\end{rem}
\newpage
\begin{thm}\label{BKer}
Let $F$ be a field of characteristic prime to $p$, $\beta\in H^k$ a symbol, and $\alpha\in H^n\cap \BKer(\beta), \gamma\in H^m\cap \BKer(\beta)$. Then the 3MP $\langle \alpha, \beta, \gamma \rangle$ is defined and for any presentation $\alpha=\sum_{i=1} ^{s} \alpha_i; \gamma=\sum_{j=1} ^{t} \gamma_j$ where the summands are basic elements of $\Ker(\beta)$ we have,
$$\langle \alpha, \beta, \gamma \rangle \subseteq \sum_{i,j}(\alpha_i\cdot H^{k+m-1}+\gamma_j\cdot H^{n+k-1}).$$
\end{thm}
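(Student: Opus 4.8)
The plan is as follows. Since $\alpha,\gamma\in\BKer(\beta)\subseteq\Ker(\beta)$ and the cup product is graded-commutative, we have $\alpha\cdot\beta=0$ and $\beta\cdot\gamma=0$, so $\langle\alpha,\beta,\gamma\rangle$ is defined. Fix a presentation $\alpha=\sum_{i=1}^{s}\alpha_i$, $\gamma=\sum_{j=1}^{t}\gamma_j$ into basic elements of $\Ker(\beta)$. For each pair $(i,j)$ the product $\langle\alpha_i,\beta,\gamma_j\rangle$ is then also defined (again $\alpha_i,\gamma_j\in\Ker(\beta)$), and since $\alpha_i$ and $\gamma_j$ are \emph{basic} elements of $\Ker(\beta)$, \Tref{Basic} gives $0\in\langle\alpha_i,\beta,\gamma_j\rangle$; by \Pref{3mp} this upgrades to the equality $\langle\alpha_i,\beta,\gamma_j\rangle=\alpha_i\cdot H^{k+m-1}+\gamma_j\cdot H^{n+k-1}$. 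So the only work beyond \Tref{Basic} is an additivity bookkeeping argument on defining systems.

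Next I would glue cochain-level defining systems. Choose cocycles $a_i,b,c_j$ representing $\alpha_i,\beta,\gamma_j$, so that $a:=\sum_i a_i$ and $c:=\sum_j c_j$ represent $\alpha$ and $\gamma$. Using $\alpha_i\beta=0=\beta\gamma_j$ in cohomology, pick $u_i\in C^{n+k-1}$ with $\delta(u_i)=a_ib$ and $v_j\in C^{k+m-1}$ with $\delta(v_j)=bc_j$, and set $u:=\sum_i u_i$, $v:=\sum_j v_j$, so that $\delta(u)=ab$ and $\delta(v)=bc$ by $\Z/p\Z$-linearity of $\delta$. Then $A=(a,b,c;u,v)$ is a defining system for $\langle\alpha,\beta,\gamma\rangle$, each $A_{ij}=(a_i,b,c_j;u_i,v_j)$ is a defining system for $\langle\alpha_i,\beta,\gamma_j\rangle$, and bilinearity of the cup product gives
$$c(A)=av+(-1)^{n+1}uc=\sum_{i,j}\bigl(a_iv_j+(-1)^{n+1}u_ic_j\bigr)=\sum_{i,j}c(A_{ij}),$$
where the sign $(-1)^{n+1}$ is the same for $A$ and for each $A_{ij}$ because $\deg a=\deg a_i=n$. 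Passing to cohomology, $[c(A)]=\sum_{i,j}[c(A_{ij})]$ with each $[c(A_{ij})]\in\langle\alpha_i,\beta,\gamma_j\rangle=\alpha_i H^{k+m-1}+\gamma_j H^{n+k-1}$, hence $[c(A)]\in\sum_{i,j}(\alpha_i H^{k+m-1}+\gamma_j H^{n+k-1})$.

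Finally, by \Pref{3mp} the whole Massey product is this particular value together with its indeterminacy, $\langle\alpha,\beta,\gamma\rangle=[c(A)]+\alpha\cdot H^{k+m-1}+\gamma\cdot H^{n+k-1}$. Since $\alpha=\sum_i\alpha_i$ we get $\alpha\cdot H^{k+m-1}\subseteq\sum_i\alpha_i H^{k+m-1}$, and likewise $\gamma\cdot H^{n+k-1}\subseteq\sum_j\gamma_j H^{n+k-1}$; combined with the containment for $[c(A)]$ this yields $\langle\alpha,\beta,\gamma\rangle\subseteq\sum_{i,j}(\alpha_i H^{k+m-1}+\gamma_j H^{n+k-1})$, as claimed. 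The only points that need care are purely formal: checking that the summed cochains $u,v$ form a legitimate defining system (immediate from linearity of $\delta$ and bilinearity of $\cdot$) and that the signs in $c(A)$ and in the $c(A_{ij})$ genuinely match (immediate from $\deg a=n$). I expect no real obstacle here; the substantive input is entirely \Tref{Basic} (hence ultimately \Lref{L1} and \Tref{Tig}), which kills each basic slot $\langle\alpha_i,\beta,\gamma_j\rangle$.
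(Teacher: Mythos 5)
Your proof is correct and follows essentially the same route as the paper: both arguments sum cochain-level defining systems for the pairwise products $\langle\alpha_i,\beta,\gamma_j\rangle$ to obtain a defining system for $\langle\alpha,\beta,\gamma\rangle$ whose value is $\sum_{i,j}c(A_{ij})$, invoke \Tref{Basic} to place each $c(A_{ij})$ in $\alpha_i\cdot H^{k+m-1}+\gamma_j\cdot H^{n+k-1}$, and then absorb the indeterminacy $\alpha\cdot H^{k+m-1}+\gamma\cdot H^{n+k-1}$ into the larger sum. Your explicit sign check and the observation that the indeterminacy is contained in $\sum_i\alpha_i H^{k+m-1}+\sum_j\gamma_j H^{n+k-1}$ are exactly the bookkeeping the paper performs.
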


\begin{proof}
The fact that the $\3MP$ is defined is follows from the exact sequence of Theorem \ref{2}, further note that for every relevant $i,j$ we have that the $\3MP$ $\langle \alpha_i, \beta, \gamma_j \rangle$ is also defined. Thus for every $i,j$ there exist a defining system $C_{i,j}=\{\varphi_{\alpha_i\beta},\varphi_{\beta \gamma_j}\}$ for $\langle \alpha_i, \beta, \gamma_j \rangle$ with values $c_{i,j}~=~\alpha_i~\cdot~\varphi_{\beta \gamma_j}~+~\varphi_{\alpha_i\beta}~\cdot~\gamma_j$. Notice that by Theorem \ref{Basic} we have $0~\in~\langle~\alpha_i,~\beta,~\gamma_j~\rangle$, hence we have $c_{i,j}~\in~\alpha_i~\cdot~H^{k+m-1}~+~\gamma_j~\cdot~H^{n+k-1}$. Now define $\varphi_{\alpha \beta}=\sum \varphi_{\alpha_i\beta}$, $\varphi_{\beta \gamma}=\sum \varphi_{\beta \gamma_j}$ and notice that $C=\{\varphi_{\alpha \beta}, \varphi_{\beta \gamma}\}$ is a defining system for $\langle \alpha, \beta, \gamma \rangle$ with value $c=\alpha\cdot \varphi_{\beta \gamma}+\varphi_{\alpha \beta}\cdot \gamma=\sum_{i,j}(\alpha_i\cdot \varphi_{\beta \gamma_j}+\varphi_{\alpha_i\beta}\cdot \gamma_j)=\sum_{i,j}c_{i,j}\in \sum_{i,j}(\alpha_i\cdot H^{k+m-1}+\gamma_j\cdot H^{n+k-1}).$ Finally, as the indeterminacy $\alpha\cdot H^{k+m-1}+\gamma\cdot H^{n+k-1}$ is a subgroup of $\sum_{i,j}(\alpha_i\cdot H^{k+m-1}+\gamma_j\cdot H^{n+k-1})$, we obtain:
$$\langle \alpha, \beta, \gamma \rangle=c+\alpha\cdot H^{k+m-1}+\gamma\cdot H^{n+k-1}\subseteq \sum_{i,j}(\alpha_i\cdot H^{k+m-1}+\gamma_j\cdot H^{n+k-1}),$$ and we are done.
\end{proof}

%\begin{rem}
%It seems that the existence of $p$-generic splitting varieties was proved for fields of characteristic zero but it is implied that things work for fields of characteristic different for $p$ (see \cite{SJ} middle of page 246). Also in \cite{MS}%(Mer Sus: MOTIVIC COHOMOLOGY OF THE SIMPLICIAL MOTIVEOF A ROST VARIETY]
%they are used for fields of characteristic different for $p$. Hence most likely, all the above is valid for these fields as well.
%\end{rem}

\section{The case of characteristic prime to $p$}

Let $F$ be a field of characteristic $q>0$ prime to $p$. In this section we prove that results \Lref{L1} (assuming $F$ is also prime to $p$ closed), \Tref{Tig} and \Tref{n1m} are also valid over $F$. The main idea is to reduce to the characteristic zero case by building a mixed characteristic complete local ring, having $F$ as its residue.
We have the following Theorem taken from [\cite{Matsumura}, section 29].%We start with further assuming that $F$ is finitely generated as a field over $\Fq$, which we can always assume in our situation as every object we deal with is defined by a finite number of elements. The following should be standard, we include a proof for completeness.
\begin{prop}\label{model}
There exist a complete local ring $R$, with respect to a discrete valuation $\mu$, (associated to the maximal ideal $pR$) of characteristic zero, having $F$ as its residue.
\end{prop}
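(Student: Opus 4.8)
The plan is to combine the classical Cohen structure theorem (the reference to \cite{Matsumura}, §29, gives exactly the existence statement of Proposition \ref{model}) with a specialization argument in Galois cohomology. First I would invoke Cohen's theorem: since $F$ is a field of characteristic $q>0$, there exists a complete Noetherian local ring $(R,\mathfrak{m})$ with maximal ideal $\mathfrak{m}=pR$, residue field $R/pR\cong F$, and $R$ of characteristic zero; being a complete regular local ring of dimension one, $R$ is a complete DVR with uniformizer $p$ and valuation $\mu$. Let $E=\operatorname{Frac}(R)$, a field of characteristic zero and complete with respect to $\mu$. The point of passing from $F$ to $E$ is that the machinery of Norm varieties and generic splitting varieties used in \Lref{L1}, \Tref{Tig} and \Tref{n1m} (via \cite{Weib}, \cite{SJ}) is available over $E$.

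Next I would set up the comparison between the mod $p$ cohomology of $F$ and of $E$. The key input is Gabber's rigidity / the specialization isomorphism for a henselian (here complete) DVR with residue characteristic prime to $p$: there is a canonical specialization map $H^n(E,\mu_p^{\otimes j})\twoheadleftarrow H^n(R,\mu_p^{\otimes j})\xrightarrow{\sim} H^n(F,\mu_p^{\otimes j})$, compatible with cup products, with restriction along finite extensions, with corestriction, and sending symbols to symbols (a symbol $\{u_1,\dots,u_n\}$ over $F$ lifts to $\{\tilde u_1,\dots,\tilde u_n\}$ for any units $\tilde u_i\in R^\times$ reducing to $u_i$). Concretely, since $q\ne p$ the units of $R$ surject onto $F^\times$ up to $p$-th powers, so $H^1(F)=F^\times/(F^\times)^p$ is hit from $H^1(E)$, and Kato's results (or Gabber rigidity) give that the induced map on the full mod $p$ cohomology algebra is an isomorphism in each degree. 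I would also note that if $F$ is prime to $p$ closed then we may replace $E$ by a prime to $p$ closure without affecting the statements, so the hypothesis "prime to $p$ closed" in \Lref{L1} transfers.

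With this dictionary in place, the three reductions are formal. For \Lref{L1}: given a symbol $\alpha\in H^n(F)$ and $b\in H^1(F)$ with $\alpha\cdot b=0$, lift to $\tilde\alpha\in H^n(E)$ a symbol and $\tilde b\in H^1(E)$ with $\tilde\alpha\cdot\tilde b=0$ (the lift of $0$ is $0$ under the isomorphism); apply the characteristic-zero case of \Lref{L1} already proved in §4 to get a presentation $\tilde\alpha=\tilde s_1\cdots\tilde s_n$ with $\tilde s_n\cdot\tilde b=0$; then specialize back down to $F$, obtaining $\alpha=s_1\cdots s_n$ with $s_n\cdot b=0$. For \Tref{Tig}: a separable quadratic extension $K/F$ lifts to an unramified quadratic extension $\tilde K/E$ with residue field $K$ (Hensel), and the specialization maps commute with $\operatorname{Cor}_{\tilde K/\tilde E}$ and $\operatorname{Res}_{\tilde K/\tilde E}$; the classes $\alpha'$, $k$ lift appropriately, and one reads the equivalence off the characteristic-zero \Tref{Tig}. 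For \Tref{n1m}: Massey products are defined at the cochain level, and the specialization can be realized on cochain complexes (or one simply uses \Pref{3mp} together with the fact that $0\in\langle\alpha,a,\beta\rangle$ is equivalent to a cup-product membership $c(A)\in\alpha H^m+\beta H^n$, which transfers); lifting the defined $\3MP$ over $F$ to one over $E$ with the same weight and symbol hypotheses, applying the characteristic-zero \Tref{n1m}, and specializing gives $0\in\langle\alpha,a,\beta\rangle$.

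The main obstacle is making the specialization isomorphism fully precise and checking its compatibility with all the structure the proofs use — cup product, transfer along quadratic extensions, and (for \Tref{n1m}) the cochain-level defining systems of Massey products — rather than just its existence in each degree. The cleanest route is to avoid cochain-level bookkeeping entirely: use \Pref{3mp} to reduce "$0\in\langle\cdot,\cdot,\cdot\rangle$" to a statement purely about cup products and $H^\bullet$, so that only the (well-documented) ring-isomorphism and transfer-compatibility of Gabber rigidity are needed. One should also record explicitly that a defined $\3MP$ over $F$ does lift to a defined one over $E$: this follows because the vanishing of the relevant cup products $\alpha\cdot a=a\cdot\beta=0$ lifts, and definedness of a triple Massey product is equivalent to those vanishings. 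I would present the section in this order: state the Cohen/structure input (\Pref{model}); state the rigidity dictionary as a lemma with the four compatibilities; then give the three short transfer arguments for \Lref{L1}, \Tref{Tig}, \Tref{n1m} in turn.
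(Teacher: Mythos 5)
Your proof of Proposition \ref{model} itself is exactly what the paper does: both simply invoke the Cohen structure theorem from \cite{Matsumura}, \S 29, to produce a complete characteristic-zero DVR with maximal ideal $pR$ and residue field $F$; this part is correct and identical in approach. The remainder of your proposal concerns the subsequent transfer of \Lref{L1}, \Tref{Tig} and \Tref{n1m}, which goes beyond the stated proposition; for the record, the paper realizes that dictionary via the isomorphism $\Gal(F^{sep}/F)\cong\Gal(K^{ur}/K)$ of profinite groups (hence of cochain DGAs) together with the ramification map, rather than citing Gabber rigidity, but the two routes are essentially equivalent here.
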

%\begin{proof}
%By assumption there is a projection: $p_1: \Fq[x_1,\dots, x_n]\to F$, with kernel $J=\langle f_1, \dots f_t \rangle$ where the $f_i$'s are polynomials with $\Fq$ coefficients. Lifting them to polynomials $F_1,\dots, F_t$ over $\Z$ we can construct $Q=\Z[x_1,\dots, x_n]/ \langle F_1, \dots, F_t \rangle$, and get a projection $p_3:Q\to F$ with kernel $I=qQ$. Then $I$ is a maximal ideal and we can define $R$ to be the completion of $Q$ with respect to the $I$-adic topology. Then $R$ has a discrete valuation $\mu$ defined by the maximal ideal $I=\langle q \rangle$, and residue field $F$.
%\end{proof}

Let $\pr:R\to F$ be the natural projection and let $K$ be the fraction field of $R$. Then, the maximal ideal of $R$ defines a discrete valuation, $\mu$, on $R$ which extends to $K$. Since $K$ is complete with respect to $\mu$, Hensel's Lemma allows one to lift (via $\pr$) Galois extensions of $F$ to unramified Galois extensions of $K$, and we know (see for example \cite[page 52 corollary of proposition 3.3]{LF} or \cite[appendix, Theorem A.23]{TW}) that there is an isomorphism $i:\Gal(F^{sep}/F)\cong \Gal(K^{ur}/K)$. This isomorphism yields an isomorphism of the differential graded $\Zp$-algebras of continues cochains, \begin{equation} \label{eq1} i:C(\Gal(K^{ur}/K),\Zp)\to C(\Gal(F^{sep}/F), \Zp),\end{equation} inducing an isomorphism of their cohomology algebras.
After inflating we get a map: $\beta: H(\Gal(F^{sep}/F), \Zp)\to H(\Gal(K^{sep}/K),\Zp)$.
Recall (for more details see \cite[chapter 7]{GS}) that we have a ramification map associated with $\mu$:
$$\ram :H^n(K,\Zp) \to H^{n-1}(F,\Zp)$$ such that $$\ram(u_1\cdots u_{n-1}\cdot \pi)=\pr(u_1)\cdots \pr(u_{n-1})$$ where $u_i$'s are units in $R$ and $\pi$ is a fixed uniformizer for $\mu$.

Given a sequence of elements $a_1,\dots, a_t\in H(\Gal(K^{ur}/K),\Zp)$, let $T=\sum a_iH(\Gal(K^{sep}/K),\Zp)$ and $T^{ur}=\sum a_iH(\Gal(K^{ur}/K),\Zp)$.
We have the following lemma:
\begin{lem}\label{ur}
Let $\alpha \in H(\Gal(K^{ur}/K),\Zp)$ be such that after Inflation to $H(\Gal(K^{sep}/K),\Zp)$ one has, $\alpha\in T$. Then, $\alpha\in T^{ur}$.
\end{lem}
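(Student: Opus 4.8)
The statement says: if $\alpha \in H(\mathrm{Gal}(K^{ur}/K),\mathbb{Z}/p\mathbb{Z})$ inflates into $T = \sum_i a_i H(\mathrm{Gal}(K^{sep}/K),\mathbb{Z}/p\mathbb{Z})$, then already $\alpha \in T^{ur} = \sum_i a_i H(\mathrm{Gal}(K^{ur}/K),\mathbb{Z}/p\mathbb{Z})$, where the $a_i$ are unramified classes. The mechanism I expect to exploit is the ramification map $\ram\colon H^n(K)\to H^{n-1}(F)$ together with the splitting of the cohomology of a complete discretely valued field into an ``unramified part'' and a ``ramified part''. Concretely, for a complete discretely valued field $K$ with residue field $F$ of characteristic prime to $p$, one has (see \cite[chapter 7]{GS}) a split exact sequence $0 \to H^n(F) \xrightarrow{\text{infl}} H^n(K) \xrightarrow{\ram} H^{n-1}(F) \to 0$, where the inflation is along $\mathrm{Gal}(K^{ur}/K)\cong G_F$, and a choice of uniformizer $\pi$ gives a section $H^{n-1}(F)\to H^n(K)$, $\eta \mapsto \tilde\eta \cdot (\pi)$. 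Thus every class in $H^n(K)$ decomposes uniquely as $\mathrm{infl}(\eta_0) + \mathrm{infl}(\eta_1)\cdot(\pi)$ with $\eta_0\in H^n(F)$, $\eta_1\in H^{n-1}(F)$, and the ``unramified'' classes — those in the image of inflation from $\mathrm{Gal}(K^{ur}/K)$ — are exactly those with $\ram = 0$, i.e. the ones with $\eta_1 = 0$.

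The plan is as follows. First I would write a general element of $T$ as $\sum_i a_i \cdot h_i$ with $h_i \in H(\mathrm{Gal}(K^{sep}/K),\mathbb{Z}/p\mathbb{Z})$, and decompose each $h_i = u_i + w_i$ into its unramified part $u_i$ (pulled back from $\mathrm{Gal}(K^{ur}/K)$) and its ramified part $w_i$, which after multiplying out using the uniformizer $\pi$ I can take of the form $w_i = v_i \cdot (\pi)$ with $v_i$ unramified. Since each $a_i$ is unramified, $a_i u_i$ is unramified and $a_i w_i = (a_i v_i)\cdot(\pi)$ is ``purely ramified''. Hence $\alpha = \sum_i a_i u_i + \big(\sum_i a_i v_i\big)\cdot(\pi)$. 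Now apply $\ram$: since $\alpha$ is itself unramified, $\ram(\alpha) = 0$, and because the decomposition into unramified part plus $(\text{unramified})\cdot(\pi)$ is \emph{unique}, the purely ramified summand must vanish, i.e. $\big(\sum_i a_i v_i\big)\cdot(\pi) = 0$. Therefore $\alpha = \sum_i a_i u_i$ with each $u_i$ unramified, which exhibits $\alpha \in T^{ur}$, as desired.

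The step I expect to be the main obstacle — and the one I would spell out carefully — is the uniqueness of the decomposition $H^n(K) = \mathrm{infl}\,H^n(F) \oplus \big(\mathrm{infl}\,H^{n-1}(F)\big)\cdot(\pi)$ and, relatedly, the identification of the image of inflation from $\mathrm{Gal}(K^{ur}/K)$ with the kernel of $\ram$. This requires that $K$ be complete (so that Hensel lifts residue extensions and $\mathrm{Gal}(K^{ur}/K)\cong G_F$, which is exactly the setup of \Pref{model} and the isomorphism \eqref{eq1}), and that $\mathrm{char}(F)$ be prime to $p$ (so the tame/unramified theory of \cite[chapter 7]{GS} applies with $\mathbb{Z}/p\mathbb{Z}$ coefficients). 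Once this structural fact is in hand, the argument is just bookkeeping: split, multiply by the unramified classes $a_i$, and read off that the ramified component is forced to be zero. A minor point to address is that multiplying an arbitrary $h_i$ by the uniformizer and by $a_i$ keeps the unramified-versus-ramified bookkeeping clean; this follows from the formula $\ram(u_1\cdots u_{n-1}\cdot \pi) = \pr(u_1)\cdots\pr(u_{n-1})$ together with the projection/Leibniz behavior of $\ram$ with respect to multiplication by unramified classes, namely $\ram(a \cdot x) = \pr(a)\cdot \ram(x)$ for $a$ unramified (equivalently, unramified classes form a subring over which $\ram$ is a module map in the appropriate sense).
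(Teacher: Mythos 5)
Your proposal is correct and follows essentially the same route as the paper: both decompose each coefficient class as (unramified part) $+$ (unramified part)$\cdot(\pi)$ using the split exact sequence for a complete discretely valued field, multiply by the unramified $a_i$, apply $\ram$ using $\ram(a\cdot x)=\pr(a)\cdot\ram(x)$, and conclude from $\ram(\alpha)=0$ and the injectivity of inflation that the purely ramified summand vanishes. No substantive difference.
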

\begin{proof}
If $\alpha=0$ we are done, so we assume $\alpha\neq 0$. %Clearly it is enough to prove it for homogeneous elements.
Now, by assumption we can write $\alpha=\sum a_ib_i$ where $a_i\in H^{n_i}(\Gal(K^{ur}/K),\Zp)$ and $b_i\in H^{m_i}(\Gal(K^{sep}/K),\Zp)$. It will be enough to show that $b_i\in H^{m_i}(\Gal(K^{ur}/K),\Zp)$.
Assume the $b_i$'s are ramified, so we can write {$b_i=c_i+b'_i \cdot\pi$}~where $\pi$ is a fixed uniformizer for $\mu$, $b'_i\in  H^{m_i-1}(\Gal(K^{ur}/K),\Zp)$ is some pre-image of $\ram(b_i)$ under the isomorphism $i$, ((\ref{eq1}) above) and the $c_i$'s are in  $H^{m_i}(\Gal(K^{ur}/K),\Zp)$ such that equality holds. Indeed such $c_i$ always exist because of the isomorphism $i$, ((\ref{eq1}) above) and the fact that $b_i-b'_i \pi$ is in the kernel of the ramification map. Now, since $\alpha$ is unramified we get that $0=\ram(\alpha)=\sum \pr(a_i\cdot b'_i)$ so again via the isomorphism $i$ above we see that $0=\sum a_ib'_i$ implying that $$\alpha=\sum a_i(c_i+b'_i \cdot\pi)=\sum a_ic_i+\sum a_ib'_i \cdot\pi=\sum a_ic_i,$$ and we are done.
\end{proof}
\newpage
We also have the following technical Lemma:
\begin{lem}\label{tec}
Fix a uniformizer $\pi$ for $\mu$. Every symbol $\beta\neq 0$ over $K$ has a presentation of the form $\beta=u_1\cdots u_{n-1}\cdot (u_n\pi^{\epsilon})$ where $u_i$'s are units in $R$, $\epsilon\in \{0,1\}$.
Moreover, for any such presentation $\epsilon=0$ if and only if $\beta$ is unramified.
\end{lem}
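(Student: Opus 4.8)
The plan is to argue entirely via Kummer theory and the Steinberg relation over the complete discretely valued field $K$, after a short normalization of the setup. Since $F$ contains a primitive $p$-th root of unity and $p$ is a unit in $R$ (its residue characteristic being $q\neq p$), Hensel's lemma lifts $\mu_p$ into $R^{\times}$, so $K$ contains $\mu_p$ and Kummer theory identifies $H^1(K,\Zp)$ with $K^{\times}/(K^{\times})^p$; for $f\in K^{\times}$ I write $(f)\in H^1(K,\Zp)$ for its class, so that $(fg)=(f)+(g)$ and a degree-$n$ symbol is a cup product $(f_1)\cdots(f_n)$. As $R$ is a complete discrete valuation ring with uniformizer $\pi$ and valuation $\mu$, each class in $K^{\times}/(K^{\times})^p$ has a representative $u\pi^{a}$ with $u\in R^{\times}$, $0\le a\le p-1$ (absorb $p$-th powers of $\pi$). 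Throughout I will use the relations $(f)\cdot(-f)=0$, hence $(f)\cdot(f)=(f)\cdot(-1)$, in $H^2(K,\Zp)$. \emph{Step 1 (straightening).} Write $\beta=(f_1)\cdots(f_n)$ with $f_i=u_i\pi^{a_i}$, $u_i\in R^{\times}$, $0\le a_i\le p-1$, and induct on the number $r$ of indices $i$ with $a_i\neq0$. If $r\ge 2$, pick two such indices, say $1$ and $2$, and choose $t\in\Z$ with $ta_1\equiv a_2\pmod p$ (possible since $a_1$ is invertible mod $p$); set $h=(-1)^{t}f_2 f_1^{-t}$. Then $\mu(h)=a_2-ta_1\equiv0\pmod p$, so $(h)=(u)$ for some $u\in R^{\times}$, while expanding $(h)=t(-1)+(f_2)-t(f_1)$ and using $(f_1)\cdot(f_1)=(f_1)\cdot(-1)$ gives $(f_1)\cdot(h)=(f_1)\cdot(f_2)$ in $H^2(K,\Zp)$. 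Substituting this degree-two factor back into $\beta$ replaces $(f_2)$ by the unit class $(u)$ and lowers $r$; iterating leaves $r\le1$.

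\emph{Step 2 (normalizing the exponent).} If $r=0$ then $\beta=(u_1)\cdots(u_n)$ with all $u_i\in R^{\times}$: this is the asserted form with $\epsilon=0$. If $r=1$, I would first reorder the factors so the unique ramified one comes last — permuting one-dimensional cup factors introduces only a sign $\pm1$, which I absorb into a unit via $-(f)=(f^{-1})$ — obtaining $\beta=(u_1)\cdots(u_{n-1})\cdot(u_n\pi^{a})$ with $u_i\in R^{\times}$, $1\le a\le p-1$, and necessarily $n\ge 2$. Choosing $b$ with $ab\equiv1\pmod p$, the identity
\[ (u_n^{b}\pi)\cdot(u_{n-1}^{a})=ab\,(u_n)\cdot(u_{n-1})+a\,(\pi)\cdot(u_{n-1})=(u_n)\cdot(u_{n-1})+(\pi^{a})\cdot(u_{n-1})=(u_n\pi^{a})\cdot(u_{n-1}) \]
shows that replacing the pair $(u_{n-1}),(u_n\pi^{a})$ by $(u_{n-1}^{a}),(u_n^{b}\pi)$ leaves $\beta$ unchanged, and now the only non-unit entry is $u_n^{b}\pi$, i.e. $\epsilon=1$. (For $n=1$ the statement is merely the Kummer description of $H^1(K,\Zp)$, so there is nothing to prove there.)

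\emph{Step 3 (the ``moreover'').} Here I would invoke two standard facts about the complete valued field $K$ (see \cite[Chapter~7]{GS}): for every $m$, inflation along the isomorphism $i$ of \eq{eq1} embeds $H^m(F,\Zp)$ into $H^m(K,\Zp)$ and sends $\pr(v_1)\cdots\pr(v_m)$ to $(v_1)\cdots(v_m)$ for $v_i\in R^{\times}$; and $\ram\colon H^n(K,\Zp)\to H^{n-1}(F,\Zp)$ vanishes exactly on the image of $H^n(F,\Zp)$. Given a presentation $\beta=(u_1)\cdots(u_{n-1})\cdot(u_n\pi^{\epsilon})$ as produced above: if $\epsilon=0$, every entry is a unit, so $\beta$ is inflated from $F$ and $\ram(\beta)=0$, i.e. $\beta$ is unramified. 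If $\epsilon=1$, then $\ram(\beta)=\pr(u_1)\cdots\pr(u_{n-1})\in H^{n-1}(F,\Zp)$; were this zero, injectivity of inflation would force $(u_1)\cdots(u_{n-1})=0$ in $H^{n-1}(K,\Zp)$, hence $\beta=0$, contradicting $\beta\neq0$. So $\ram(\beta)\neq0$ and $\beta$ is ramified. This proves that $\epsilon=0$ if and only if $\beta$ is unramified, for any presentation of the stated shape.

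The arguments are short, and the main thing to get right is the bookkeeping: verifying that the two rewriting identities in Steps 1 and 2 are literal equalities — so that substituting the altered degree-two factor back into the length-$n$ symbol really fixes $\beta$ — and tracking the graded-commutativity signs incurred when permuting one-dimensional factors in Step 2, which are harmless since any sign can be pushed into a unit entry via $-(f)=(f^{-1})$. The one genuine limitation worth flagging is that the normalization to $\epsilon\in\{0,1\}$ uses $n\ge 2$; the degenerate case $n=1$ is immediate from Kummer theory.
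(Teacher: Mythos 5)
Your proof is correct and follows essentially the same route as the paper: use bilinearity and the Steinberg relation to concentrate all ramification into a single slot, then read off the ``moreover'' from the residue map together with the injectivity of unramified cohomology. In fact your write-up is more careful than the paper's terse sketch --- in particular your Step 2, normalizing the exponent of $\pi$ from $a$ to $1$ when $p$ is odd, fills in a point the paper passes over as ``clear'' (and your observation that this normalization genuinely needs $n\ge 2$ is right: for $n=1$ and $p\ge 3$ a class such as $(\pi^2)$ is not of the stated form, a degenerate case that is irrelevant to the lemma's applications).
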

\begin{proof}
Start with a presentation $\beta= s_1\cdots s_n$. If all slots are unramified (so may be assumed to be units) or only one slot is ramified, the first part of the Lemma is clear. If more than one slot is ramified, say  $s_1=u_1\pi^i; s_2=u_2\pi^j$ (we may assume that by the anti-commutativity of cup product), then $s_1\cdot s_2=(u_1\pi\cdot u_2\pi)^{ij}=(u_1/u_2)^{ij}\cdot u_2\pi$ and now the first slot is a unit and the first part of the Lemma follows. For the second part, let $\beta=u_1\cdots u_{n-1}\cdot (u_n\pi^{\epsilon})$ be a presentation as in the Lemma. If $\epsilon=0$, $\beta$ is unramified. If $\epsilon=1$ then, $\ram(\beta)= \pr(u_1)\cdots \pr(u_{n-1})$ is non trivial and $\beta$ is ramified. Indeed, if $\pr(u_1)\cdots \pr(u_{n-1})=0$, then we get that $u_1\cdots u_{n-1}=0$ (by the isomorphism of the unramified cohomology and that of the residue field) and $\beta$ is trivial contrary to our assumption.
\end{proof}

\begin{cor}\label{ctec}
Fix a uniformizer $\pi$ for $\mu$ and let $\beta$ be a non trivial symbol over $K$. If $\beta$ is unramified, then for every presentation, $\beta~=~s_1\cdots s_n$, all slots are unramified, as elements of $H^1$.
\end{cor}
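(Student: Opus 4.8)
The plan is to argue by contradiction. Suppose $\beta\ne 0$ is an unramified symbol over $K$ admitting a presentation $\beta=s_1\cdots s_n$ in which at least one of the slots $s_i\in H^1(K)$ is ramified. The first step is to reduce, \emph{without changing the class $\beta$}, to a presentation with exactly one ramified slot. For this I would reuse the manipulation in the proof of \Lref{tec}: whenever two slots $s_i=u_i\pi^{a}$ and $s_j=u_j\pi^{b}$ are both ramified, one may (after reordering the factors, which is harmless up to a nonzero scalar and does not affect whether a slot is ramified) replace the sub-product $s_i\cdot s_j$ by a product of a slot that is a unit and a slot of valuation $1$. Each such step leaves $\beta$ fixed and strictly decreases the number of ramified slots, so after finitely many steps and one more reindexing I may assume $\beta=s_1\cdots s_{n-1}\cdot s_n$ with $s_1,\dots,s_{n-1}$ units and $s_n=u\cdot\pi^{e}$, where $u$ is a unit and $p\nmid e$ (the latter because $e=\mu(s_n)\not\equiv 0\pmod p$, $s_n$ being ramified).

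The second step is a residue computation. Recall that $\ram\colon H^n(K)\to H^{n-1}(F)$ is a well-defined homomorphism which is additive (hence multilinear on symbols in each slot), vanishes on symbols all of whose slots are units, and sends $u_1\cdots u_{n-1}\cdot\pi$ to $\pr(u_1)\cdots\pr(u_{n-1})$ for units $u_i$. Writing $s_n=u\cdot\pi^{e}$ and expanding the last slot of $\beta=s_1\cdots s_{n-1}\cdot s_n$ accordingly, one obtains $\ram(\beta)=e\cdot\pr(s_1)\cdots\pr(s_{n-1})$ in $H^{n-1}(F)$. Since $\beta$ is unramified this vanishes, and since $e$ is invertible modulo $p$ we conclude $\pr(s_1)\cdots\pr(s_{n-1})=0$ in $H^{n-1}(F)$.

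The third step transports this back to $K$. Each unit $s_i$ ($1\le i\le n-1$) defines an unramified class in $H^1(K)$, and under the isomorphism induced by $i$ of \Eq{eq1} — identifying the unramified cohomology of $K$ with $H^{*}(F)$ — the symbol $s_1\cdots s_{n-1}\in H^{n-1}(K)$ corresponds to $\pr(s_1)\cdots\pr(s_{n-1})$. Hence $s_1\cdots s_{n-1}=0$ in $H^{n-1}(K)$, and therefore $\beta=(s_1\cdots s_{n-1})\cdot s_n=0$, contradicting $\beta\ne 0$. Thus no presentation of $\beta$ can contain a ramified slot.

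I expect the main obstacle to be the first step: one must check that the reduction procedure of \Lref{tec} can be started from an \emph{arbitrary} presentation, that it terminates at \emph{exactly one} ramified slot rather than none, and that it never alters the class $\beta$. This is only careful bookkeeping built on \Lref{tec}, but it is precisely where the hypothesis $\beta\ne 0$ is genuinely used. The residue computation in the second step is the standard tame-symbol formula, and the third step is immediate from the identification already set up via \Eq{eq1}.
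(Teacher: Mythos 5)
Your proof is correct and is essentially the argument the paper intends: run the reduction from the proof of \Lref{tec} to arrive at a standard-form presentation with exactly one ramified slot, then use the residue computation and the identification of unramified cohomology with $H^{*}(F)$ to contradict $\beta\neq 0$. The only cosmetic difference is that you re-derive the ``moreover'' clause of \Lref{tec} inline (allowing exponent $e$ with $p\nmid e$ rather than normalizing to $\epsilon=1$) instead of citing it directly.
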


\begin{cor}\label{gen}
Let $F$ be a field of characteristic prime to $p$, which is prime to $p$ closed, $\alpha=a_1\cdots a_n \in H^n(F, \Zp)$ and $b\in H^1(F,\Zp)$. Then $\alpha \cdot b=0$ if and only if there is a presentation $\alpha=s_1\cdots s_n$ such that $s_n\cdot b=0$ (i.e \Lref{L1} is also valid for fields of characteristic prime to $p$).
\end{cor}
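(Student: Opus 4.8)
The plan is to deduce the statement from the already-established characteristic zero case of \Lref{L1}, by lifting the data to the fraction field $K$ of the complete discrete valuation ring $R$ of \Pref{model}, solving the problem over a prime to $p$ closure of $K$, and pushing the resulting presentation back down to $F$ through the identification of unramified cohomology with $H(F)$. The $(\Leftarrow)$ implication and the degenerate cases (if $\alpha=0$ take all $s_i=0$; if $b=0$ any presentation works) are trivial, so I assume $\alpha\neq 0$ and $\alpha\cdot b=0$.

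First I would transport the data up along the injective ring homomorphism $\beta\colon H(F)\to H(K)$ built above (inflation followed by the Hensel isomorphism $i$ of \eq{eq1}), whose image is exactly the unramified part of $H(K)$; here injectivity and the description of the image are the standard split residue exact sequence for a complete discretely valued field. As $\beta$ is a ring map, $\beta(\alpha)$ is a non-trivial unramified symbol in $H^n(K)$, $\beta(b)\in H^1(K)$ is unramified, and $\beta(\alpha)\cdot\beta(b)=0$ in $H^{n+1}(K)$. For later use I record the reformulation that, since for \emph{any} complete discretely valued field with residue field $F$ the Hensel isomorphism identifies its unramified subring with $H(F)$ as a ring, producing the desired presentation of $\alpha$ over $F$ is the same as producing, over such a field, an \emph{unramified} presentation of the corresponding unramified lift of $\alpha$ all of whose slots are unramified, with the last slot annihilating the unramified lift of $b$.

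Since $K$ has characteristic zero but is not prime to $p$ closed, I cannot invoke \Lref{L1} over $K$ itself; instead I would pass to a prime to $p$ closure $L/K$. It is of characteristic zero, contains $\mu_p$, and is prime to $p$ closed, and because restriction along a prime-to-$p$ extension is injective on mod $p$ cohomology (as $\Cor\circ\res$ is multiplication by the degree) we still have $\beta(\alpha)_L\neq 0$ and $\beta(\alpha)_L\cdot\beta(b)_L=0$. Then \Lref{L1} over $L$ gives $\beta(\alpha)_L=\tilde t_1\cdots\tilde t_n$ with $\tilde t_i\in H^1(L)$ and $\tilde t_n\cdot\beta(b)_L=0$. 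Since $L$ is the union of its finite prime-to-$p$ subextensions and mod $p$ Galois cohomology commutes with this union, this presentation and the relation on the last slot already hold over a finite prime-to-$p$ extension $K'/K$ inside $L$, say $\beta(\alpha)_{K'}=t'_1\cdots t'_n$ with $t'_i\in H^1(K')$ and $t'_n\cdot\beta(b)_{K'}=0$. Because $F$ is prime to $p$ closed, $K'/K$ is totally ramified of degree prime to $p$, so $K'$ is again a complete discretely valued field with residue field $F$, and the restriction to $K'$ of the unramified class $\beta(\alpha)$ stays unramified (the compositum of $K^{ur}$ with $K'$ is unramified over $K'$). Hence $\beta(\alpha)_{K'}$ is a non-trivial unramified symbol over $K'$, so \Cref{ctec} forces every $t'_i$ to be unramified; feeding this into the reformulation of the previous paragraph (with $K'$ as the complete discretely valued field) produces a presentation $\alpha=s_1\cdots s_n$ over $F$ with $s_n\cdot b=0$, as required.

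The step I expect to be the main obstacle is this final descent. One must verify that the presentation obtained over the prime to $p$ closure descends to a \emph{finite} prime-to-$p$ extension; that the restriction of unramified classes to such an extension remains unramified, so that \Cref{ctec} genuinely forces every slot to be unramified; and that the Hensel/residue identifications for $K$ and for $K'$ are compatible, so that an unramified presentation of $\beta(\alpha)$ over $K'$ really is the $\beta$-image of a presentation of $\alpha$ over $F$ with the annihilation condition intact. These are precisely the ramification-bookkeeping points that \Lref{ur}, \Lref{tec} and \Cref{ctec} of this section are set up to handle.
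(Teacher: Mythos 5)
Your proof is correct and follows the same overall strategy as the paper: lift $\alpha$ and $b$ to unramified classes over the fraction field $K$ of the complete local ring of \Pref{model}, invoke the characteristic zero case of \Lref{L1}, use \Cref{ctec} to conclude that every slot of the resulting presentation is unramified, and specialize back to $F$ through the Hensel identification. The one place you genuinely diverge is that you notice $K$ itself need not be prime to $p$ closed (it admits totally ramified extensions of degree prime to $p$, e.g.\ by adjoining roots of a uniformizer), whereas \Lref{L1} is only established for prime to $p$ closed fields; the paper applies the characteristic zero case to $K$ directly without comment. Your repair --- pass to a prime to $p$ closure $L$ of $K$, descend the presentation and the relation $\tilde t_n\cdot\beta(b)_L=0$ to a finite subextension $K'/K$ of degree prime to $p$, observe that $K'/K$ is totally ramified because $F$ admits no prime-to-$p$ residue extensions, so $K'$ is again complete discretely valued with residue field $F$, and then run \Cref{ctec} over $K'$ --- is sound and fills a real gap in the published argument. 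The supporting points you flag (descent from the colimit to a finite level, injectivity of restriction along prime-to-$p$ extensions, preservation of unramifiedness and of nontriviality under restriction to $K'$, and compatibility of the Hensel identifications for $K$ and $K'$) all check out, so your version is, if anything, the more complete proof.
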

\begin{proof}
Let $\alpha, b$ be as above. The characteristic zero case is \Lref{L1}. For the case of characteristic prime to $p$, construct the complete local ring $K$ as above and lift $\alpha, b$ to $i(\alpha), i(b)\in H(\Gal(K^{ur}/K), \Zp)$.
Then, $i(\alpha)\cdot i(b)=0$ since $i$ is an isomorphism. Thus by the characteristic zero case we have that $i(\alpha) = s_1\cdots s_n$ such that $s_n\cdot i(b)=0$. Since $i(\alpha)$ is unramified we get from \Cref{ctec} that all slots are unramified, so may be assumed to be units. Hence $\alpha=\pr(s_1)\cdots \pr(s_n)$ and $\pr(s_n)\cdot b=0$ as needed.
%If $s_n$ is ramified we can use it to make the other slots of $\alpha$ unramified as in the proof of \Lref{tec}, so we may assume $s_1, \dots, s_{n-1}$ are unramified. But as $i(\alpha)$ is unramified, \Lref{tec} implies that $s_n$ is unramified contrary to assumption. Thus $s_n$ is unramified. Now by \Lref{tec} we may write $s_1\cdots s_{n-1}=u_1\cdots u_{n-2}\cdot (u_{n-1}\pi^{\epsilon})$ to get $i(\alpha)=u_1\cdots u_{n-2}\cdot (u_{n-1}\pi^{\epsilon})\cdot s_n$ and again by \Lref{tec} and the fact that $i(\alpha)$ is unramified we conclude that $\epsilon=0$. Finally we see that all the slots of this new presentation of $i(\alpha)$ are unramified, so we can go back to $F$ via the isomorphism $i$ (\ref{eq1} above), and get that $\alpha=\pr(s_1)\cdots \pr(s_n)$ and clearly, $\pr(s_n)\cdot b=0$ as needed.
\end{proof}
\newpage
\begin{cor}
\Tref{n1m} and \Tref{Tig} are also valid for a field, $F$, of characteristic prime to $p$.
\end{cor}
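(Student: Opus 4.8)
The plan is to observe that, once \Cref{gen} is in hand, the proofs of \Tref{n1m} and \Tref{Tig} given in Section~4 carry over essentially verbatim to a field $F$ of characteristic $q>0$ prime to $p$. Indeed, \Lref{L1} is the only ingredient of those two arguments whose proof genuinely used characteristic zero (through the existence of norm and generic splitting varieties), and \Cref{gen} now supplies exactly that statement over $F$; every other tool invoked in Section~4 is already valid in arbitrary characteristic prime to $p$. So the strategy is simply to rerun each proof, replacing every appeal to \Lref{L1} by an appeal to \Cref{gen}.

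For \Tref{n1m}: the Section~4 argument begins by using \Pref{p} to reduce to the case where $F$ is prime to $p$ closed, and passing to a prime-to-$p$ closure does not change the characteristic, so the reduced field is still of characteristic prime to $p$ and \Cref{gen} applies to it. One then uses \Cref{gen} to obtain presentations $\alpha=a_1\cdots a_n$ and $\beta=b_1\cdots b_m$ with $a_n\cdot d=0$ and $d\cdot b_1=0$, so that the weight-$(1,1,1)$ product $\langle a_n,d,b_1\rangle$ is defined; its vanishing is the main theorem of \cite{EM2},\cite{Mat14}, which holds for arbitrary $p$ over any field of characteristic prime to $p$ containing a primitive $p$-th root of unity. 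Finally \Tref{KT1} propagates $0$ into $\langle\alpha,d,\beta\rangle$ exactly as before. None of these steps is sensitive to the characteristic.

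For \Tref{Tig} (so $p=2$ and $\ch(F)\neq 2$) the same scheme works. The Section~4 proof first reduces, via the projection formula and invariance of both sides under odd-degree base change, to a prime-to-$2$ closure $L$ of $F$, which still has characteristic $\neq 2$. Over $L$ it splits $\alpha'_L=\alpha''\cdot s$ with $s\cdot\N_{KL/L}(k)=0$ using \Lref{L1}, now furnished by \Cref{gen}, and then applies Tignol's \Lref{Tignol} to the quaternion algebra $(s,k)_{2,KL}$; crucially, \Lref{Tignol} is already stated for arbitrary fields of characteristic $\neq 2$, so it needs no lifting. Re-assembling the symbols gives $\alpha_{KL}=\alpha'_{KL}\cdot t$ and hence $\alpha=\res_{K/F}(\alpha'\cdot d)$; the converse direction is the formal identity $\Cor_{K/F}\circ\res_{K/F}=[K:F]\equiv 0\pmod 2$ and is characteristic-free. (Alternatively, one could lift each datum to the complete local ring as in the proof of \Cref{gen} and descend the resulting unramified relation via \Lref{ur}; but the verbatim route is shorter.)

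I do not anticipate a real obstacle. The single characteristic-zero input underlying this whole family of statements, namely the norm-variety machinery behind \Lref{L1}, was already transported to characteristic prime to $p$ earlier in this section by lifting $F$ to the fraction field of the complete local ring of \Pref{model}, using the cochain isomorphism between unramified cohomology over that field and cohomology over $F$, and descending unramified relations via \Lref{ur}, \Lref{tec} and \Cref{ctec}; everything downstream of \Cref{gen} is purely cohomological and formal. The one bookkeeping point to watch is that both proofs call on \Cref{gen} only after passing to a prime-to-$p$ closure, so that the ``prime to $p$ closed'' hypothesis of \Cref{gen} is genuinely in force, and that the weight-$(1,1,1)$ vanishing theorem is quoted in its form valid over arbitrary fields of characteristic prime to $p$.
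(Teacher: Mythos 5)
Your proposal is correct. For \Tref{Tig} it coincides with the paper's argument, which likewise just observes that \Cref{gen} together with the odd-degree descent of \Pref{p} supplies the only characteristic-zero input, so the Section~4 proof runs unchanged. For \Tref{n1m}, however, you take a genuinely different route: you rerun the Section~4 proof over $F$ itself, substituting \Cref{gen} for \Lref{L1} and invoking the weight-$(1,1,1)$ vanishing theorem of \cite{EM2}, \cite{Mat14} directly over the prime-to-$p$ closure of $F$ (legitimate, since that theorem holds over any field of characteristic prime to $p$ containing a primitive $p$-th root of unity), and then propagating with \Tref{KT1}. The paper instead lifts the entire Massey product --- classes and defining system --- to the characteristic-zero fraction field $K$ of the complete local ring via the cochain isomorphism $i$, applies the characteristic-zero case of \Tref{n1m} there, and descends (tacitly using \Lref{ur} to see that the resulting element of the indeterminacy may be taken unramified and then pulled back along $i$). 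Your route avoids the lifting and the descent step entirely, at the cost of having to verify that every other ingredient of the Section~4 argument is characteristic-free, which you do; the paper's route is a black-box reduction that does not reopen the proof of \Tref{n1m}, but leaves the unramified-descent step implicit. Both arguments are valid.
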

\begin{proof}
\Tref{Tig} is clear in light of \Cref{gen} and \Pref{p}. As for \Tref{n1m}, we only need to show that if over $F$ the $\3MP$ $\langle \alpha, a, \beta\rangle$ is defined with a defining system $\varphi$ then lifting to $K$ the $\3MP$ $\langle i(\alpha), i(a), i(\beta)\rangle $ is defined over $K$ with defining system $i(\varphi)$. But this is clear as the isomorphism $i$ is an isomorphism of the differential graded algebra of continuous cochains and we are done.
\end{proof}

\end{document}